\newcommand{\ignore}[1]{}
\newcommand{\abs}[1]{\left\lvert {#1} \right\rvert}
\newcommand{\sabs}[1]{\lvert {#1} \rvert}
\newcommand{\C}{{\mathbb{C}}}
\newcommand{\R}{{\mathbb{R}}}
\newcommand{\sF}{{\mathcal{F}}}
\newcommand{\sI}{{\mathcal{I}}}
\newcommand{\sX}{{\mathcal{X}}}
\newcommand{\ourgeneric}[1]{{${\mathcal{O}}_{#1}$-generic}}
\newcommand{\rank}{\operatorname{rank}}
\newtheorem{thm}{Theorem}[section]
\newtheorem{prop}[thm]{Proposition}
\newtheorem{lemma}[thm]{Lemma}
\theoremstyle{definition}
\newtheorem{defn}[thm]{Definition}
\newtheorem{example}[thm]{Example}
\theoremstyle{remark}
\newtheorem{remark}[thm]{Remark}
\author{Bernhard Lamel}
\address{Faculty of Mathematics, University of Vienna, Austria}
\email{bernhard.lamel@univie.ac.at}
\author{Ji\v{r}\'{\i} Lebl}
\thanks{The second author was in part supported by Simons Foundation collaboration grant 710294.}
\address{Department of Mathematics, Oklahoma State University,
Stillwater, OK 74078, USA}
\email{lebl@okstate.edu}
\date{September 19, 2024}
\title{Intrinsic complexification of real-analytic varieties}
\keywords{Intrinsic complexification, Segre variety, CR geometry}
\subjclass[2020]{32V05, 32V40 (Primary), 14B05, 14P15 (Secondary)}
\begin{document}


\begin{abstract}
We introduce a framework for Segre varieties for singular real-analytic 
subvarieties of a complex space and utilize it to study the intrinsic 
complexifications of these subvarieties. Many examples illustrate the subtle issues
arising in the singular setting. 
\end{abstract}

\maketitle



\section{Introduction}

In this paper, we study one of the basic objects of real-analytic CR geometry, the {\em Segre
variety}, for possibly singular real-analytic subvarieties of a complex space. For 
regular real-analytic submanifolds which are furthermore CR, these have been studied and
used extensively over the past decades; instead of referencing the many authors (and we are bound to forget some!) who have used this tool, we point the interested reader to the book by Baouendi, Ebenfelt, and Rotschild \cite{BER:book} and
the many references in there. Segre variety techniques are also important 
in the singular setting, implicitly appearing for example in Diederich and Forn{\ae}ss' proof of their Theorem 4 in \cite{MR0477153}. One of the first explicit uses of Segre
varieties in the singular setting 
is probably in the paper of Burns and Gong~\cite{MR1704996}.  D'Angelo in his book~\cite{MR1224231} 
also introduces ideas extending to the singular setting.
A key complication is that in the singular setting, Segre varieties a priori 
depend on the defining functions, the neighbourhoods on which we work, and possible noncoherence of 
the structure sheaf implies that any definition might even depend on the point that we are 
working at. 

Our first goal here is to introduce Segre varieties in the singular setting carefully, illustrating
by many examples along the way possible pitfalls for naive adaptations from the regular setting. Our 
main motivation was to explore concepts from CR geometry which in the regular setting are 
amenable to description via Segre varieties in this singular setting. This is done in 
section~\ref{sec:varieties} and leads us to introduce 
the notion of {\em Segre nondegeneracy}, extending our earlier work \cite{MR4311600} for 
singular totally real varieties to the general situation.  

In particular, we are able shed light on 
the concept of {\em intrinsic complexifications}, the smallest complex-analytic subvariety containing a given real-analytic subvariety,  and the related notion of {\em genericity} for singular real-analytic 
subvarieties in this framework, and show how the behavior of the Segre varieties near a singular point influences 
the behaviour of the intrinsic complexification. While examples show that the dimension of 
the intrinsic complexification depends in general on the point on the variety, in section~\ref{sec:intrinsic} we are able to 
give a number of sufficient conditions ensuring that the intrinsic complexification is well-behaved. 

\section{Varieties and complexifications}\label{sec:varieties}

By a real-analytic subvariety $X$ of an open set $U \subset \R^m$
we mean a closed subset of $U$ that is locally given by the vanishing of a
family of real-analytic functions.
Denote by $X_{\rm{reg}}$ the set of regular points, that is, points at which
$X$ is a real-analytic submanifold. Recall that the 
dimension of $X$ is just the maximum of the 
dimensions of the regular points of $X$ and the dimension of
$X$ at $p$ (or  the dimension of the germ of $X$ at $p$)
is the infimum over all representatives of the germ $(X,p)$. 

Let $(X,q) \subset (\C^n,q)$ be a germ of a real-analytic subvariety.
Suppose $(X,q)$ is of (real) codimension $k$ at the origin,
that is, dimension $2n-k$.
Given a set $U \subset \C^n$, denote by $U^* = \{ z : \bar{z} \in U \}$ its complex conjugate. We are 
going to identify $\C^n$ with the diagonal $\iota (\C^n) \subset \C^{2n}$, where $\iota (z) = (z, \bar z)$. In particular, we can consider $U\subset \C^{2n} $ by identifying it with $\iota(U) = \{ (z, \bar z) \in U\times U^* \colon z \in U \}$.

\begin{defn}
For a germ $(X,q)$ of a real-analytic subvariety,
the smallest germ $(\sX,(q,\bar{q})) \subset (\C^n \times \C^n, (q,\bar{q}))$ of
a complex analytic subvariety
such that $\iota((X,q)) \subset (\sX,(q,\bar{q}))$ is called the
\emph{complexification} of $(X,q)$.

Let $(X,q)$ be a germ of a real-analytic subvariety and
$U \subset \C^n$ a neighborhood of the origin.
Let $X^U \subset U$ be the smallest 
real-analytic subvariety containing the germ $(X,q)$.
We let $\sX^U$ be the smallest
complex-analytic subvariety of $U \times U^*$ such that $\iota(X^U) \subset \sX^U$.
We call $\sX^U$ the \emph{precomplexification} of $X^U$ if
$\iota(X^U) = \sX^U \cap \iota(\C^n)$.

We say $\sX^U$ is the \emph{complexification} of $X$ if
$(\sX^U,(p,\bar{p}))$ is the complexification
of $(X,p)$ for every $p \in X^U$.

The \emph{Segre variety} (with respect to $U$) of a point $p\in U$ is defined as
\begin{equation}
\Sigma_p^U = \{ z \in U \colon (z,\bar p) \in \sX^U \}.
\end{equation}
We say $X^U$ is \emph{Segre nondegenerate} if $\dim \Sigma_p^U$ is constant
for all $p \in U$.
We say $X^U$ is \emph{maximally Segre nondegenerate} if $\dim \Sigma_p^U = n-k$
for all $p$, where $k$ is the real codimension of $X^U$.
The germ $(X,q)$ is (maximally) Segre nondegenerate
if there is some (maximally) Segre nondegenerate representative.
\end{defn}

\begin{example}\label{ex:cartanumbrella}
    If $X$ is not coherent at $q$, for example if $X\subset \C^2_{(z,w)}$ is the 
    variant of the Cartan umbrella
    given by 
    \[ (x^2 + y^2) s = x^3, \quad z=x+iy, \quad w=s+it, \]
    then its precomplexification is not a complexification, 
    because near every point of $\C_w \subset \C^2$ but 
    at the origin  the variety $X$ is $2$-dimensional (it agrees 
    with $z=0$ there) while at the origin, 
    it is $3$-dimensional. The precomplexification therefore 
    has also dimension $3$ throughout any neighbourhood, 
    which is larger than the dimension of the complexification 
    of $X$ at the points just mentioned. 
\end{example}

\begin{example}
    A difference in dimension is not the only reason which can make a precomplexification fail to be 
    an actual complexification: For an example 
    where the precomplexification is 
    of the same dimension but develops a 
    singularity near regular points of the 
    variety, we refer the reader to Example 2.5. in \cite{MR4407043}. 
\end{example}

\begin{example}
    Real hypersurfaces are always 
    maximally Segre nondegenerate at any 
    regular point, but can fail to be 
    Segre nondegenerate at singular points.
    For example, the subvariety given by $\abs{z}^2=\abs{w}^2$ in $\C^2$
is Segre degenerate at the origin
as $\Sigma_0^U = U$ for any connected neighborhood
$U$ of the origin, while $\Sigma_p^U$ is a complex line for any $p \not= 0$.
We collect some classical results in a proposition.
\end{example}

\begin{example} One example of varieties which are Segre degenerate comes from some types of
CR-singular real-analytic submanifolds of $\C^n$, for example $w = |z|^2$ in $\C^2$. This variety is defined by the equations $w= z \bar z$ and $\bar w = z\bar z$. The Segre variety of a 
point $p=(\bar \alpha, \bar \beta)$ is given by $\Sigma_{(\bar \alpha,\bar \beta)}^{\C^2} = \{ (\frac{\beta}{\alpha}, \beta) \}  $ if $\alpha\neq 0$, and $\Sigma_{(0,0)}^{\C^2} = \{ (z,0) \colon z \in \C \} $.

   On the other hand, the CR-singular
   submanifold defined by $w= z^2 + \bar z^2$ is maximally Segre nondegenerate.
\end{example}

For simplicity, let us assume for the 
rest of the paper that our variety passes through the origin. The following proposition collects some simple
standard facts.  For more details, see for example~\cite{MR1013362}.

\begin{prop} \label{prop:goodnbhd}
Let $(X,0)\subset (\C^n,0)$ be the germ of a real-analytic subvariety.  Then there exists a connected neighborhood $U\subset \C^n$ of the origin and a
real-analytic function $\rho \colon U \to \R^\ell$ such that:
\begin{enumerate}[\rm (i)]
\item
The power series $\rho(z,\bar{z})$ at the origin converges on $U \times U^*$
as a power series $\rho(z,\xi)$.
\item
$X^U = \{ z : \rho(z,\bar{z}) = 0 \}$.
\item
$\sX^U = \{ (z,\xi) : \rho(z,\xi) = 0 \}$.
\item
The components of $\rho$ generate the ideal 
$I_0(X)$.
\item
$(X^U,0) = (X,0)$.
\item
The (complexified) components of $\rho$ generate the ideal 
$\sI_{(p,q)}(\sX^U)$ for every $(p,q) \in \sX^U$.
\item
In particular, $(\sX^U,0)$ is the complexification of $(X,0)$.  Note, however, that $(\sX^U,(p,\bar{p}))$ may fail to be the complexification of $(X^U,p)$ for $p$ different from the origin.
\item
$\dim_{\C} \sX^U = \dim_{\R} X^U = \dim_{\R} (X,0)$.
\item
$\iota(X^U) = \sX^U \cap \iota(\C^n)$,
in other words $\sX^U$ is the precomplexification of $X^U$.
\item
If $(X,0)$ is irreducible, then $X^U$ and $\sX^U$ are irreducible.
\item
The irreducible components of $(X,0)$ are in one-to-one correspondence with the irreducible components of $(\sX^U,0)$.
\end{enumerate}
Furthermore, $U \subset \C^n$ is a neighborhood satisfying the conditions above
and $W \subset U$ is another neighborhood of
the origin, then there exists a neighborhood $U_1 \subset W$,
with the properties above (using the same $\rho$).
The neighborhood $U_1$ can always be taken to be a ball or a polydisc.
\end{prop}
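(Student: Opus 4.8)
We must construct a single neighborhood $U$ and defining function $\rho$ that simultaneously witness all eleven listed properties, and then verify stability of these properties under shrinking. The natural starting point is the ideal $I_0(X) \subset \mathcal{O}_0(\R^{2n})$ (viewing $\C^n = \R^{2n}$ with coordinates $z, \bar z$), which is finitely generated by the Noetherian property of the ring of convergent power series; choose generators and let $\rho = (\rho_1,\dots,\rho_\ell)$ be a vector packaging them. The key algebraic input is that each $\rho_j(z,\bar z)$, being a convergent real-analytic series, extends to a holomorphic series $\rho_j(z,\xi)$ on a common polydisc $\Delta \times \Delta^* $, which gives (i); shrink $\Delta$ so that all the finitely many generators converge there, and take $U = \Delta$. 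Property (v) is then the statement that these germ-generators cut out exactly the germ $(X,0)$, which is the definition of $I_0(X)$, and (ii) follows by possibly shrinking $U$ further so that no spurious components of $\{\rho = 0\}$ sneak in away from the origin — here we invoke that $X^U$, the smallest real variety containing the germ, is by definition $\{\rho(z,\bar z)=0\}$ on a small enough representative.

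The heart of the matter is the \emph{coherence-type} statement in (vi): that the complexified generators $\rho_j(z,\xi)$ generate $\mathcal{I}_{(p,q)}(\sX^U)$ at \emph{every} point of $\sX^U$, not just the origin. This is where I expect the main obstacle to lie. The complexification $\sX^U := \{(z,\xi) : \rho(z,\xi) = 0\}$ is a complex-analytic set defined by holomorphic functions on $U \times U^*$; the ideal sheaf of a complex-analytic set is coherent (Oka's coherence theorem), so the subsheaf generated by $\rho_1,\dots,\rho_\ell$ agrees with the full ideal sheaf on some neighborhood of the origin in $U \times U^*$. After shrinking $U$ once more (using the polydisc/ball refinement clause, which we will also need at the end), we may assume this holds on all of $U \times U^*$, giving (vi). From (vi) the remaining properties cascade: (iii) is the definition of $\sX^U$ once we know nothing smaller works, which again is a shrinking argument; (iv) restates the generation of $I_0(X)$; (vii) is (vi) specialized at the origin together with the definition of complexification; (viii) follows because the real dimension of a real-analytic germ equals the complex dimension of its complexification — a standard fact, provable by noting $\sX^U$ is invariant under the antiholomorphic involution $(z,\xi)\mapsto(\bar\xi,\bar z)$ whose fixed-point set is $\iota(\C^n)$, and a complexification of a real-analytic germ has the same dimension (see the cited \cite{MR1013362}).

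For (ix), the inclusion $\iota(X^U) \subset \sX^U \cap \iota(\C^n)$ is immediate; conversely, a point $(z,\bar z)$ with $\rho(z,\bar z) = 0$ lies in $X^U$ by (ii), giving equality. Properties (x) and (xi) are decomposition statements: the real-to-complex correspondence of irreducible components follows from the fact that the involution $(z,\xi)\mapsto(\bar\xi,\bar z)$ either fixes each complex-irreducible component of $(\sX^U,0)$ or swaps it with a conjugate partner, and in the coherent-at-$0$ setting guaranteed by (vi)–(vii) this matching descends to the real germ; irreducibility of $(X,0)$ forces $\sX^U$ irreducible because a nontrivial splitting of $\sX^U$ would restrict, via $\iota$, to a nontrivial splitting of $X^U$ near $0$, contradicting (v). Finally, for the refinement clause: given $W \subset U$, the function $\rho(z,\xi)$ still converges on $W \times W^*$, and all the sheaf-theoretic properties (vi), being local, persist on any subneighborhood; so it suffices to pick $U_1 \subset W$ a polydisc or ball small enough that $X^{U_1} = \{\rho = 0\} \cap U_1$ has no extra components, which is possible since $(X^W, 0) = (X,0)$ and we can shrink below the ``reach'' of those components. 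This last point — ensuring the chosen small representative introduces no spurious branches — is the only genuinely delicate verification, and it is handled uniformly by the observation that a real-analytic variety has, near any point, a neighborhood basis on which it coincides with the germ.
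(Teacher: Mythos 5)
Your overall construction---choose generators of $I_0(X)$, complexify, use coherence of the ideal sheaf (Cartan's theorem, incidentally, not Oka's) to propagate generation of $\sI_{(p,q)}(\sX^U)$ near the origin, then shrink---is the same natural route the paper sketches, and the steps for (i), (iv)--(viii) are essentially fine. The genuine gap is exactly at the point the paper flags as the only delicate one: you treat (ii), (iii), (ix) and the final refinement clause as pure shrinking arguments, whereas the paper explicitly notes that taking a small enough neighborhood is \emph{not} enough. Since $\sX^U$ is by definition the smallest complex subvariety of $U\times U^*$ containing $\iota(X^U)$, property (iii) requires showing that \emph{every} complex subvariety of $U\times U^*$ containing $\iota(X^U)$ contains all of $\{(z,\xi):\rho(z,\xi)=0\}$. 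Germ-level reasoning at the origin only takes care of the irreducible components through $0$; whether the zero set of the globally chosen $\rho$ can be strictly larger than $\sX^U$ depends on the \emph{shape} of $U$, not merely its size---the paper points out that solvability of the second Cousin problem in $U\times U^*$ is needed, and that balls or polydiscs suffice. Your proposal never actually proves that ``nothing smaller works''; the sentence asserting this is ``again a shrinking argument'' is not an argument. Likewise your justification of (ii) is circular: $X^U$ is defined as the smallest real-analytic subvariety of $U$ containing the germ $(X,0)$, not as $\{\rho(z,\bar z)=0\}$, and their equality is precisely what must be proved; the identity-principle-type propagation you would want to invoke fails in the real-analytic category (this is the noncoherence phenomenon the paper illustrates), which is why the argument must run through the complexification and through the choice of neighborhood.

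Relatedly, your closing claim that the only delicate verification is avoiding spurious real branches, handled ``by the observation that a real-analytic variety has, near any point, a neighborhood basis on which it coincides with the germ,'' misses the actual issue: that observation produces \emph{some} neighborhoods, while the proposition requires the properties for neighborhoods of the specific product shape $U\times U^*$ and, in the furthermore clause, that the \emph{same} $\rho$ works on a ball or polydisc $U_1$ inside any given $W$. The restriction to balls/polydiscs is not incidental, as it appears in your write-up, but is what guarantees both that the irreducible components of $\{\rho=0\}\cap(U_1\times U_1^*)$ correspond to those of the germ (local cone structure) and that $\rho$ does not vanish on a larger set than $\sX^{U_1}$ (Cousin II). Your derivations of (x) and (xi) also silently use that every irreducible component of $\sX^U$ passes through the origin, which again comes from this choice of neighborhood rather than from shrinking alone.
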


The proof is to take the generator of $I_0(X)$ and find a small enough neighborhood
$U$ where the power series converges as required.  Then, as subvarieties have
locally the structure of cones, we can find $U$ small enough so that the
irreducible components correspond as given.  It is however not enough to
take just a small enough neighborhood.  In particular, a clearly necessary
condition is for the second Cousin problem to be solvable in $U \times U^*$,
otherwise $\rho$ may vanish on a larger set than $\sX^U$.
Using a ball or a polydisc is sufficient.

\begin{defn}
Given $(X,0)$, we say a neighborhood $U$ is \emph{good} (or
\emph{good for $(X,0)$}) if it satisfies the
conclusions of the Proposition~\ref{prop:goodnbhd}.
With a good neighbourhood $U$ of the origin and $\rho$ as in Proposition~\ref{prop:goodnbhd},
we define the Segre variety of $(X,0)$   by
\begin{equation}
\Sigma_p^U = \{ z \in U : \rho(z,\bar{p}) = 0 \} .
\end{equation}
Furthermore, the proposition implies that the germ of the Segre variety
at the origin $(\Sigma_0^U,0)$ is well-defined.
Write $\Sigma_0$ for this germ, which we call the \emph{Segre variety germ} at $0$.
\end{defn}
However, note that $\Sigma_p^U$ depends on $U$ and also on the point $0$.
For $p\not=0$, the germ $(\Sigma_p^U,p)$ may be very 
different from the germ of the Segre variety we get
if we replace the origin with $p$ and consider neighborhoods $U$
of $p$ which do not contain $0$.  In other words, the precomplexification and
Segre varieties are always with respect to a certain fixed point, in our case, the
origin.

\begin{prop}
Let $(X,p)$ be a germ of a real-analytic subvariety, then
there exists a good neighborhood $U$ for $(X,p)$ such that
the irreducible components of $(\Sigma_p^U,p)$
are in one-to-one correspondence with irreducible components
of $\Sigma_p^U$.
\end{prop}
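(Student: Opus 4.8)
The plan is to translate so that $p=0$, realize $\Sigma_p^U$ as an honest complex-analytic subvariety cut out by holomorphic functions, and then shrink the neighborhood using the local structure theory of complex-analytic sets while keeping the goodness condition of Proposition~\ref{prop:goodnbhd}. For the reduction: a translation of $\C^n$ is a biholomorphism carrying $(X,p)$ to a germ through the origin and sending the precomplexification, the Segre variety, and their decompositions into irreducible components (both as germs and as subvarieties of a neighborhood) to the corresponding objects for the translated germ, so it suffices to treat $p=0$. Then I would fix a good neighborhood $U_0$ for $(X,0)$ with a real-analytic $\rho\colon U_0\to\R^\ell$ as in Proposition~\ref{prop:goodnbhd} and set $g_j(z)=\rho_j(z,0)$; the $g_j$ are holomorphic on $U_0$, vanish at $0$, and $\Sigma_0^{U_0}=\{z\in U_0 : g_1(z)=\cdots=g_\ell(z)=0\}$ is a complex-analytic subvariety of $U_0$ containing $0$.

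The main step is a cone-structure shrinking. Decompose the germ $(\Sigma_0^{U_0},0)$ irredundantly into its finitely many irreducible components $\sigma_1,\dots,\sigma_r$. By the local parametrization theorem each $\sigma_i$ has a representative that is irreducible in every sufficiently small ball about $0$, and by the local conic structure of complex-analytic sets there is $\varepsilon_0>0$ such that for every ball $U_1=B_\varepsilon(0)\subset U_0$ with $\varepsilon<\varepsilon_0$ one has $\Sigma_0^{U_0}\cap U_1=\bigcup_{i=1}^r(\sigma_i\cap U_1)$, with the sets $\sigma_i\cap U_1$ being precisely the (pairwise distinct) irreducible components of $\Sigma_0^{U_0}\cap U_1$; in particular every global component of $\Sigma_0^{U_0}\cap U_1$ then passes through $0$, the components of $\Sigma_0^{U_0}$ not through $0$ having been cut off. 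This is the same shrinking already invoked in the sketch of Proposition~\ref{prop:goodnbhd}; see for instance \cite{MR1013362}.

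Finally, by the last assertion of Proposition~\ref{prop:goodnbhd}, after possibly decreasing $\varepsilon$ I may take $U_1$ to be also good for $(X,0)$ with the same $\rho$, so that $\Sigma_0^{U_1}=\Sigma_0^{U_0}\cap U_1$ is genuinely the Segre variety of $(X,0)$ with respect to $U_1$. Its germ at $0$ is $\Sigma_0$; its global irreducible components are the $\sigma_i\cap U_1$, each of which has irreducible germ $\sigma_i$ at $0$ by construction; and two distinct global components have distinct germs since the decomposition of $\Sigma_0$ was irredundant. Hence $C\mapsto(C,0)$ is a bijection from the irreducible components of $\Sigma_0^{U_1}$ onto those of $\Sigma_0$, which is the claim. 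I expect the only real obstacle to be the middle step: the shrinking lemma must be stated carefully enough to guarantee simultaneously that no spurious components survive and that $\varepsilon$ may still be taken small enough to keep $U_1$ good.
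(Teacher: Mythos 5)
Your proposal is correct and takes essentially the same route as the paper: shrink to a neighborhood of $p$ in which the global irreducible components of the (fixed) Segre variety are precisely representatives of the germ components at $p$, and then invoke the last clause of Proposition~\ref{prop:goodnbhd} to choose such a neighborhood that is also good with the same $\rho$, so that $\Sigma_p^{U_1}=\Sigma_p^{U_0}\cap U_1$. The paper's own proof is a terser version of this shrinking argument (treating one offending component at a time), and your explicit ``for every sufficiently small ball'' formulation is exactly the point needed to make the shrinking compatible with retaining goodness.
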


\begin{proof}
Suppose $U_1$ is a good neighborhood of $p$.
Suppose that two irreducible components of the germ $(\Sigma_p^U,p)$
correspond to the same component $S$ of $\Sigma_p^U$.  Then there is some
smaller neighborhood $U_2 \subset U$ of $p$ so that the components of
$S \cap U_2$
and the germ
$(S,p)$ are in one-to-one correspondence.
By making $U_2$ possibly smaller,
we can make $U_2$ to be a good neighborhood for $X$ at $p$.
The irreducible components of $S \cap U_2$ must be
then irreducible components of $\Sigma_p^{U_2}$.
As $(\Sigma_p^{U_1},p) = (\Sigma_p^{U_2},p)$, the two components
that corresponded to the same component $S$ do not correspond to two
distinct components of $S \cap U_2$ and therefore of $\Sigma_p^{U_2}$.
\end{proof}

Simply having a good neighborhood may not be enough to get the conclusion of
the proposition above.  While for a good neighborhood $U$ for $(X,p)$,
the components of $(X,p)$ and $X^U$ are in one-to-one correspondence,
one may need to look at an even smaller neighborhood to get correspondence
of the components of $\Sigma_p^U$ and $(\Sigma_p^U,p)$.

\begin{example}
First consider the example $X \subset \C^2$ given by
\begin{equation}
(z_1-1)z_1^2 + z_2^2 + \overline{(z_1-1)z_1^2 + z_2^2} = 0 .
\end{equation}
The set $U = \C^2$ (or any ball or polydisc centered at the origin)
is a good neighborhood for $X$.
Near the origin, we can do a biholomorphic change of variables in $z_1$
to transform $(z_1-1)z_1^2$ to just $z_1^2$, and so after this local
biholomorphism, the
variety is given by
\begin{equation}
z_1^2 + z_2^2 + \bar{z}_1^2 + \bar{z}_2^2 = 0 ,
\end{equation}
which is irreducible, and hence $(X,0)$ is irreducible.  However,
the Segre variety at the origin is given by
\begin{equation}
(z_1-1)z_1^2 + z_2^2  = 0 ,
\end{equation}
which is irreducible as a subvariety of $\C^2$
but it 
is locally reducible at the origin.  Near the origin,
it is a union of two nonsingular
curves meeting at the origin.
So if we take a small enough ball around the origin, the
Segre variety with respect to that ball will have two components
as $(\Sigma_p^U,0)$ does.

The example can be modified to show that we cannot always pick a good neighborhood
of a point $0$ that will give us the desired property for all points $p$
near $0$.  Simply bihomogenize the equation to create a complex
cone in $\C^3$:
\begin{equation}
(z_1-\bar{z}_3)z_1^2 + \bar{z}_3 z_2^2 + (\bar{z}_1-z_3)\bar{z}_1^2 +
z_3\bar{z}_2^2 = 0 .
\end{equation}
Then no neighborhood $U$ of $0=(0,0,0)$ satisfies
the conclusion of the proposition for points
$p=(0,0,\epsilon)$ for the same reasons as above.
We would have to pick a smaller neighborhood of those points.
\end{example}

For so-called coherent subvarieties, we can find a $\rho$ that defines
the ideal at every point and hence, among many other things,
the Segre varieties do not really depend on the point.

\begin{defn}
A real-analytic subvariety $X \subset U$ is \emph{coherent} if
the sheaf of germs of real-analytic functions vanishing
on $X$ is a coherent sheaf.
We say the germ $(X,0)$ is coherent if there exists a representative $X$
that is coherent.
\end{defn}

Equivalently, $X$ is coherent if there
exists a neighborhood $W$ of $\iota(X)$ and a complex subvariety
$\sX \subset W$ which is the complexification of $X$, that is,
$\iota(X) = \sX \cap \iota(\C^n)$ and
$(\sX,(p,\bar{p}))$ is the complexification of $(X,p)$ for every $p \in X$.

Note that the neighborhood $W$ need not have product structure.  We may need
to take $X$ inside a smaller neighborhood.  That is, near a point,
we can always pick a good $U$ so that $U \times U^* \subset W$.

\begin{prop}
Let $(X,0)$ be a germ of a coherent real-analytic subvariety.
Then there exists a sufficiently small good neighborhood $U$ at the
origin, such that $U$ is also a good neighborhood for $X^U$ at all $p \in X^U$.
\end{prop}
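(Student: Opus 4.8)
The plan is to exploit the characterization of coherence recalled above: there is a representative $X$, a neighborhood $W \subset \C^n \times \C^n$ of $\iota(X)$, and a complex-analytic subvariety $\sX \subset W$ which is the complexification of $X$, so $\iota(X) = \sX \cap \iota(\C^n)$ and $(\sX,(p,\bar p))$ is the complexification of $(X,p)$ for every $p \in X$. After replacing $X$ by $X^{U_0}$ for a sufficiently small good neighborhood $U_0$ of the origin (and shrinking $W$, $\sX$ accordingly), we may assume in addition that $(X^U,p) = (X,p)$, and hence $X^U = X \cap U$, for every good $U \subset U_0$ and every $p \in X^U$. Fix generators $\rho = (\rho_1,\dots,\rho_\ell)$ of $I_0(X)$, and, using Proposition~\ref{prop:goodnbhd}, choose a good ball or polydisc $U$ at the origin with $U \times U^* \subset W$.

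The crucial point is that for every $p \in X^U$ the germ $(\sX^U,(p,\bar p))$ equals the complexification of $(X^U,p)$; this is exactly the property whose failure in the non-coherent case is described in the Note following Proposition~\ref{prop:goodnbhd}. For the inclusion $\subset$, observe that $\iota(X^U) = \iota(X) \cap (U \times U^*) \subset \sX \cap (U \times U^*)$, and the latter is a complex subvariety of $U \times U^*$, so $\sX^U \subset \sX \cap (U \times U^*)$ and therefore $(\sX^U,(p,\bar p)) \subset (\sX,(p,\bar p))$, which is the complexification of $(X,p) = (X^U,p)$. For $\supset$, the germ of $\sX^U$ at $(p,\bar p)$ is that of a complex subvariety containing $\iota((X^U,p))$, hence by minimality contains the complexification of $(X^U,p)$.

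Granting this, I would go through the conclusions of Proposition~\ref{prop:goodnbhd} one by one with the origin replaced by an arbitrary $p \in X^U$, keeping $\rho$, $U$, $X^U$ and $\sX^U$ fixed. Conclusions (i), (ii), (iii), (v), (ix) are either independent of the base point or already part of $U$ being good at the origin; (vii) is the crucial point above; and (viii), (x), (xi) follow from it since complexification preserves dimension and the decomposition into irreducible components. This leaves (iv) and (vi). For (vi) — that the complexified components of $\rho$ generate $\sI_{(p,\bar p)}(\sX^U)$ — one may quote conclusion (vi) of Proposition~\ref{prop:goodnbhd} directly (it is asserted at every point of $\sX^U$), or argue that $\sI(\sX)$ is coherent by Oka's theorem, that $\rho$ generates its stalk at $(0,\bar 0)$, so the closed locus where $\rho$ fails to generate $\sI(\sX)$ avoids the origin, and a final shrinking of $U$ — which discards no point of $X^U$ — makes $\rho$ generate $\sI(\sX^U)$ throughout $U \times U^*$. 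Then (iv) follows by restriction to the diagonal: if $f$ is real-analytic and vanishes on $X^U$ near $p$, its complexification $F$ vanishes on $\iota((X^U,p))$, so $V(F)$ contains $(\sX^U,(p,\bar p))$ by the crucial point, giving $F = \sum_i h_i \rho_i$ with $h_i$ holomorphic near $(p,\bar p)$; putting $\xi = \bar z$ yields $f = \sum_i h_i(z,\bar z)\,\rho_i(z,\bar z)$, while the $\rho_i$ themselves obviously lie in $I_p(X^U)$.

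I expect the main obstacle to be the crucial point itself — extracting from coherence, via the global complexification $\sX$, that $(\sX^U,(p,\bar p))$ is genuinely the complexification of $(X^U,p)$ — together with the bookkeeping that ensures one pair $(U,\rho)$ works uniformly over all $p \in X^U$; the latter is taken care of by the coherence of $\sI(\sX)$ and by passing to a minimal representative at the start. Once the crucial point is available, verifying the remaining conditions is routine.
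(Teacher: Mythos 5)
Your argument is correct and follows exactly the route the paper intends: the proposition is stated there without a written proof, as an immediate consequence of the equivalent characterization of coherence (a complexification $\sX$ on a neighborhood $W$ of $\iota(X)$ with $(\sX,(p,\bar p))$ equal to the complexification of $(X,p)$ for every $p\in X$) together with the preceding remark that one may choose a good $U$ with $U\times U^*\subset W$, and your ``crucial point'' is precisely this, obtained by squeezing $(\sX^U,(p,\bar p))$ between the complexification of $(X^U,p)$ and $(\sX,(p,\bar p))$, with conclusion (vi) of Proposition~\ref{prop:goodnbhd} at the origin already supplying generation of $\sI_{(p,\bar p)}(\sX^U)$ by the same $\rho$. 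The steps you assert rather than prove --- that after shrinking one has $X^U=X\cap U$, and the bookkeeping of how the conclusions of Proposition~\ref{prop:goodnbhd} are to be re-read at a base point $p\neq 0$ (e.g.\ the germ-dimension clause in (viii) for non-pure-dimensional coherent varieties) --- are at the same level of informality as the paper's own statement, so I see no substantive gap.
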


We also note that the set of regular points of top dimension 
is dense in an irreducible coherent subvariety; the Cartan umbrella from above shows this is not necessarily true 
for noncoherent varieties.

The following proposition may seem trivial, but it highlights the role that coherence plays for some CR properties. Note that in the real analytic setting, the set of regular points might very well not be connected, as just discussed. 

\begin{prop}\label{pro:CRdimconstant}
    Let $X$ be a coherent irreducible real-analytic subvariety of an open set $U\subset \C^n$. Let $Y\subset X_{\rm reg}$ be the subset of CR points of $X$. Then the CR dimension of $Y$ is constant. 
\end{prop}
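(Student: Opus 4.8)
The plan is to transfer the problem to the complexification and there read off the CR dimension as a local fiber dimension of a projection. By coherence there is a neighbourhood $W$ of $\iota(X)$ and a complex-analytic subvariety $\sX\subset W$ that is a genuine complexification of $X$, i.e.\ $\iota(X)=\sX\cap\iota(\C^n)$ and $(\sX,(q,\bar q))$ is the complexification of the germ $(X,q)$ for \emph{every} $q\in X$; we take $\sX$ to be the smallest such, namely the smallest complex subvariety of $W$ containing $\iota(X)$. First I would check $\sX$ is irreducible. At a regular point $p_0$ of $X$ where $X$ has its top dimension $d:=\dim_\R X$, the complexification of $X$ is smooth of dimension $d$ at $(p_0,\bar p_0)$ (the complexification of a real-analytic submanifold germ is smooth of the same dimension), so $\sX$ has a unique irreducible component $\sX_1$ through $(p_0,\bar p_0)$; the real locus of $\sX_1$ is a real-analytic subvariety of $X$ of dimension $d$, hence all of $X$ since $X$ is irreducible, so $\iota(X)\subset\sX_1$, and minimality forces $\sX=\sX_1$.

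Next I would describe $\sX$ near $(p,\bar p)$ for $p\in Y$. Let $m$ be the CR dimension of $X$ at $p$. As $p$ is a CR point, $X$ agrees near $p$ with a real-analytic CR submanifold of constant CR dimension $m$, so by the standard normalization of real-analytic CR submanifolds (see \cite{BER:book}) there is a local biholomorphism of $\C^n$ near $p$ carrying $X$ to $M'\times\{0\}\subset\C^{n'}\times\C^{n-n'}$, with $n'=d-m$ and $M'\subset\C^{n'}$ generic of CR dimension $m$, hence generic at each nearby point. It extends to a biholomorphism of a neighbourhood of $(p,\bar p)$ in $\C^n\times\C^n$, and since $\sX$ is the complexification it carries $\sX$ near $(p,\bar p)$ to $\{(z',\xi'):\rho'(z',\xi')=0\}\times\{(0,0)\}$, where $\rho'\colon\C^{n'}\times\C^{n'}\to\C^{n'-m}$ has $\partial_{z'}\rho'$ of full rank $n'-m$ throughout a neighbourhood. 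In these coordinates the second projection $\pi_2\colon\sX\to\C^n$, $(z,\xi)\mapsto\xi$, becomes $(z',\xi')\mapsto\xi'$, and its fiber through any nearby point of $\sX$ is $\{z':\rho'(z',\xi'_0)=0\}$ (times a point), a complex submanifold of dimension $n'-(n'-m)=m$. Hence $x\mapsto\dim_x\pi_2^{-1}(\pi_2(x))$ is identically $m$ on a neighbourhood of $(p,\bar p)$ in $\sX$; in particular $\Sigma_p\times\{\bar p\}=\pi_2^{-1}(\bar p)$ has local dimension $m$ at $p$.

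The final step is globalization. The function $x\mapsto\dim_x\pi_2^{-1}(\pi_2(x))$ is upper semicontinuous on $\sX$, and since $\sX$ is irreducible it attains its minimal value $G$ on a dense open subset $\Omega\subset\sX$. For $p\in Y$, the neighbourhood of $(p,\bar p)$ found above is a nonempty open subset of $\sX$ on which this function is constantly $m$, so $\Omega$ meets it at a point where the function equals both $m$ and $G$; thus $m=G$. As $G$ is independent of $p$, the CR dimension is the same at every point of $Y$.

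The main obstacle is conceptual, and it is exactly what keeps the statement from being trivial: the CR dimension is locally constant on $Y$ by definition, but $X_{\rm reg}$, hence $Y$, may be disconnected (cf.\ the Cartan umbrella of Example~\ref{ex:cartanumbrella}), so there is no a priori way to compare the values on different components without passing through the connected object $\sX$. Coherence enters precisely to guarantee that near \emph{every} CR point $\sX$ is the honest complexification — smooth, with $\pi_2$ of locally constant fiber dimension — and not merely the precomplexification, whose dimension may jump (again as in the Cartan umbrella). The remaining technical points to verify carefully are the global irreducibility of $\sX$ sketched above (with the minimality of the complexification and the fact that a proper real-analytic subvariety of an irreducible $X$ has strictly smaller dimension) and the standard semicontinuity of fiber dimension for holomorphic maps between analytic sets.
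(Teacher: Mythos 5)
Your argument is correct in substance, but it is a genuinely different route from the paper's. The paper never leaves $\C^n$: coherence is used only to pick one set of generators $\rho$ of $I_q(X)$ valid for all $q$ in a neighbourhood $V\subset X$ of an arbitrary point $p\in X$; at a CR point the CR dimension is $n-\rank\bar\partial\rho$, the maximal rank is attained on an open dense subset of $V$, so the CR dimension of CR points is constant on $V$, and connectedness of the irreducible $X$ finishes it — no complexification, no fiber-dimension theory. You instead pass to the global complexification $\sX$ furnished by coherence, identify the CR dimension at a CR point with the local fiber dimension of $\pi_2|_{\sX}$ (equivalently $\dim\Sigma_p$) via the normal form for CR submanifolds, and globalize by upper semicontinuity of fiber dimension on the irreducible $\sX$. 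What your approach buys is more information — it ties the CR dimension to the Segre-variety/fiber-dimension stratification that the rest of the paper is about — but it is heavier, and it leans on two supporting facts you only sketch: the global irreducibility of $\sX$ (your argument needs that a proper real-analytic subvariety of the irreducible $X$ has strictly smaller dimension, or one can instead invoke the Whitney--Bruhat/C-analytic correspondence between irreducibility of $X$ and of its complexification, cf.\ \cite{MR1013362}), and the Cartan--Remmert semicontinuity of fiber dimension. One small slip: at a CR point $p$ the local dimension of $X$ need not be the top dimension $d$ (only the top-dimensional regular points are dense in the coherent irreducible case), so in your normal form $n'$ should be $\dim_p X - m$ rather than $d-m$; this is harmless, since the fiber-dimension computation still yields the value $m$, but as written the formula is off at lower-dimensional CR points.
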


\begin{proof}
    Let $p\in X$. Since $X $ is coherent, we can choose real-analytic functions $\rho_1, 
    \dots,\rho_\ell$ near $p$ which generate $I_q (X)$ for all $q\in V \subset X$, where $V$ is an open neighbourhood of $p$ in $X$. At a CR point $q$, 
    the CR dimension $k_q$ is given by $n-\rank \bar \partial \rho $. Now the maximum rank of $\bar\partial \rho$ is
    attained on an open, dense subset of $V$ and therefore the CR dimension needs to remain constant for CR points 
    throughout $V$. 
\end{proof}

\begin{example}
    The following example shows how the proposition fails to work in the non-coherent setting by adapting 
    Example \ref{ex:cartanumbrella}. We consider 
    the subvariety $X\subset \C^2_{(z,w)}$ given by 
    \[ (t^2 + y^2) s= t^3, \quad z=x+iy , \quad w = s+it. \]
    The umbrella handle is totally real (given by $t=y=0$) while the other regular points are of CR dimension $1$.     
\end{example}

\section{Intrinsic complexification}\label{sec:intrinsic}

We generalize the definitions of ``intrinsic complexification'' and ``generic''
to subvarieties.

\begin{defn}
Suppose $X \subset U\subset \C^n$ is a real-analytic subvariety.  Let $Y \subset U$
be the smallest complex-analytic subvariety containing $X$.
The subvariety $Y$ is called the \emph{intrinsic complexification} of $X$ (in $U$).

Suppose $(X,p) \subset (\C^n,p)$ is a germ of a real-analytic subvariety.
Let $(Y,p)$ be the smallest germ of a complex subvariety of $(\C^n,p)$
such that $(X,p) \subset (Y,p)$.
Then $(Y,p)$ is called the \emph{intrinsic complexification} of $(X,p)$.
If $(Y,p) = (\C^n,p)$, then we say $(X,p)$ is \ourgeneric{p}.
\end{defn}

The reader might wonder why we introduce a new notion of 
genericity here. The examples below will discuss the relation
of the dimension of the complexification with 
classical genericity and show that we cannot use the 
same terminology without inconsistencies.

\begin{prop}
If $X \subset U$ is a real-analytic subvariety and $p \in X$,
then there exists a neighborhood $W$ of $p$ which is good for $(X,p)$
and such that if $Y \subset W$ is the intrinsic complexification in $W$ of 
$X \cap W$, then $(Y,p)$ is the intrinsic complexification of $(X,p)$.
\end{prop}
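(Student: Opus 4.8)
The plan is to pit two minimality properties against each other: the minimality of the germ-level intrinsic complexification of $(X,p)$ and the minimality of the intrinsic complexification of $X\cap W$ inside a fixed neighborhood $W$. Once $W$ is chosen small enough, these two squeeze $(Y,p)$ from both sides, and nothing else is needed.

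First I would record existence of the objects involved. For a fixed open set, the intersection of an arbitrary family of complex-analytic subvarieties is again complex-analytic (locally it is the common zero set of the union of the defining functions, whose ideal is finitely generated by the Noetherian property of $\sO$), and this intersection still contains $X\cap W$, so the intrinsic complexification in $W$ exists; the analogous statement for germs produces the smallest germ of a complex-analytic subvariety containing $(X,p)$, which I will call $(Z,p)$. Fix a representative $Z_1$ of $(Z,p)$ on some neighborhood of $p$.

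Next I would choose $W$. By Proposition~\ref{prop:goodnbhd} pick a good neighborhood $U_0$ of $p$ for $(X,p)$; since $(X,p)\subset (Z,p)$, there is a neighborhood $V$ of $p$ with $X\cap V\subset Z_1\cap V$; now apply the last part of Proposition~\ref{prop:goodnbhd} to obtain a good neighborhood $W\subset U_0\cap V$. Then $W$ is good for $(X,p)$ and $X\cap W\subset Z_1$. Set $Z_0=Z_1\cap W$, a complex-analytic subvariety of $W$ containing the closed set $X\cap W$ (closed because $X$ is a subvariety of $U$), and let $Y\subset W$ be the intrinsic complexification in $W$ of $X\cap W$.

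Finally I would conclude by the two comparisons. Since $Z_0$ is a complex-analytic subvariety of $W$ containing $X\cap W$ and $Y$ is the smallest such, $Y\subset Z_0$, hence $(Y,p)\subset (Z_0,p)=(Z,p)$. Conversely $X\cap W\subset Y$ gives $(X,p)\subset (Y,p)$, and $(Y,p)$ is the germ of a complex-analytic subvariety, so minimality of $(Z,p)$ forces $(Z,p)\subset (Y,p)$. Therefore $(Y,p)=(Z,p)$, which is exactly the assertion. The only step requiring any care is the germ/representative bookkeeping, namely that one can shrink a good neighborhood and still squeeze $X\cap W$ into a representative of the germ-level intrinsic complexification; that this shrinking is genuinely necessary is illustrated by the Cartan umbrella variant of Example~\ref{ex:cartanumbrella}, where at a point of the handle the germ of the intrinsic complexification is a complex line, while the intrinsic complexification computed in any neighborhood containing the origin is all of $\C^2$.
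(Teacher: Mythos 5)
Your proof is correct and follows essentially the same route as the paper's: take a representative of the germ-level intrinsic complexification, shrink to a good neighborhood $W$ on which $X\cap W$ is contained in that representative, and then play the two minimality properties against each other. Your two-sided comparison of germs even makes the paper's extra precaution (choosing the representative to have the same number of components as its germ) unnecessary, since the conclusion only concerns the germ $(Y,p)$.
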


\begin{proof}
Simply start with the germ of the intrinsic complexification,
and take a representative.  The neighborhood $W$ in which
this representative
is defined can be taken to be small enough to be good for $(X,p)$.
If we also take it small enough so that $Y$ has the same number
of components as $(Y,p)$, we find that $X \cap W \subset Y$ since
$(X,p) \subset (Y,p)$.
\end{proof}

\begin{example} A (germ of a) real submanifold $M$ which is generic in the
classical sense, i.e.\ a CR manifold
whose CR dimension plus codimension is equal to $n$, is \ourgeneric{p}.
Otherwise, the intrinsic complexification is the smallest complex submanifold 
containing $M$. For CR singular submanifolds the intrinsic complexification need no longer
be nonsingular.  For example, the submanifold given by
\begin{equation*}
w_1 = {\abs{z}}^4, \quad w_2 = {\abs{z}}^6
\end{equation*}
has $w_1^3=w_2^2$ as the intrinsic complexification, which is singular. 
\end{example}
\begin{example}
Note that while for a CR submanifold,
being generic at a point is simply a statement
about the tangent space, the same is not true for
a subvariety (using perhaps tangent cones)
or a CR singular submanifold.
The tangent space of $M$ given by $w=\sabs{z}^2$ at the origin 
is a complex line, and hence not a generic subspace of $T_0 \C^2$.
On the other hand, the intrinsic complexification of $(M,0)$
is $(\C^2,0)$ since at points near the origin, $M$ is a generic submanifold.
\end{example}

\begin{prop}
If $U \subset \C^n$ is a domain and
$X \subset U$ is a real-analytic subvariety.
Then
the dimension
of the intrinsic complexification of $(X,p)$ is an upper-semicontinuous function of $p$; in particular,
the set of not \ourgeneric{p} points is open in $X$.
\end{prop}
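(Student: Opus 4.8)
The plan is to establish upper-semicontinuity at an arbitrary fixed point $p_0 \in X$, and then read off the openness statement. Write $d$ for the dimension of the intrinsic complexification of $(X,p_0)$. By the preceding proposition there is a neighborhood $W$ of $p_0$, good for $(X,p_0)$, together with a complex-analytic subvariety $Y \subset W$ which is the intrinsic complexification in $W$ of $X \cap W$ and for which $(Y,p_0)$ is the intrinsic complexification of $(X,p_0)$; in particular $X \cap W \subset Y$ and $\dim_{p_0} Y = d$.

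The one step that needs care is to pass from the germ $(Y,p_0)$ to a representative whose dimension is controlled everywhere. Since the local dimension of a complex-analytic set is an upper-semicontinuous function of the point, the set $\{\, q \in W : \dim_q Y \ge d+1 \,\}$ is a closed complex-analytic subset of $W$ not containing $p_0$. Shrinking $W$ (say, to the intersection of the complement of this set with a small ball about $p_0$), we may therefore assume that $\dim_q Y \le d$ for every $q \in W$, while the inclusion $X \cap W \subset Y$ is preserved. Now for any $p \in X \cap W$ we have $(X,p) \subset (Y,p)$, and $(Y,p)$ is a germ of a complex-analytic subvariety; since the intrinsic complexification of $(X,p)$ is by definition the smallest such germ containing $(X,p)$, it is contained in $(Y,p)$ and hence has dimension at most $\dim_p Y \le d$. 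Thus the dimension of the intrinsic complexification of $(X,p)$ is at most that of $(X,p_0)$ for all $p$ near $p_0$ in $X$, which is exactly upper-semicontinuity.

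For the last assertion, note that $(X,p)$ fails to be \ourgeneric{p} precisely when the dimension of its intrinsic complexification is at most $n-1$. If this holds at $p_0$, then by the upper-semicontinuity just proved the same inequality holds for all $p$ in a neighborhood of $p_0$ in $X$, so the set of not \ourgeneric{p} points is open in $X$. The only genuine input beyond the previous proposition is the classical upper-semicontinuity of the local dimension of complex-analytic sets, together with the evident monotonicity of the intrinsic complexification under inclusion of germs; accordingly there is no substantial obstacle, only the bookkeeping of choosing the representative $Y$ correctly.
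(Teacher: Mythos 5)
Your argument is correct and follows essentially the same route as the paper: take a representative $Y$ of the intrinsic complexification germ at $p_0$, observe that it contains $(X,p)$ for nearby $p$, and conclude by minimality that the nearby intrinsic complexifications have dimension at most $\dim_{p_0} Y$. Your extra step of shrinking $W$ so that $\dim_q Y \le d$ everywhere merely makes explicit what the paper leaves implicit in choosing ``a local complex variety $Y$ of dimension $k$.''
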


\begin{proof}
Take a point $p \in X$.  If the dimension of the intrinsic complexification
of $X$ at $p$ is $k$, then there exists a local complex variety $Y$
of dimension $k$ that contains $X$ near $p$.  But then $Y$ contains $X$
(as germs) at nearby points, so the dimension of the intrinsic
complexification at all points near $p$ is at most $k$.
\end{proof}

\begin{prop}
Suppose $(X,0) \subset (\C^n,0)$ is an irreducible germ of a maximally Segre
nondegenerate subvariety of real codimension $k$.
Let $U$ be a good neighborhood for $(X,0)$ and let $Y \subset X^U_{\rm{reg}}$ be the set
of CR regular points of dimension $2n-k$.
Then $Y$ is a generic submanifold, and
consequently $X^U$ is \ourgeneric{p}\ for every $p \in \overline{Y} \cap U$. In particular, if $X^U$ is in addition
coherent, then $X^U$ is \ourgeneric{p} for every
$p \in U$.
\end{prop}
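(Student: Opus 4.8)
The plan is to exploit the hypothesis of maximal Segre nondegeneracy to show that the Segre variety germ $\Sigma_0$ has dimension exactly $n-k$, and then to argue that at a CR regular point of real dimension $2n-k$, the Segre variety locally agrees with the usual Segre variety of a CR submanifold, forcing the CR dimension to be $n-k$ — which is precisely the condition for genericity. First I would recall that for a germ $(X,0)$ of real codimension $k$, one always has $\dim_\C \sX^U = \dim_\R X^U = 2n-k$ by Proposition~\ref{prop:goodnbhd}(viii), and the complexification $\sX^U$ fibers over the $\bar\xi$-factor; the generic fiber dimension over that factor is then $\dim_\C \sX^U - n = n-k$. Maximal Segre nondegeneracy says $\dim \Sigma_p^U = n-k$ for \emph{all} $p$, so in particular the fibration $\sX^U \to U^*$, $(z,\xi)\mapsto \xi$, has all fibers of dimension $n-k$; since $X^U$ is irreducible, so is $\sX^U$ by Proposition~\ref{prop:goodnbhd}(x), and one gets a well-behaved equidimensional picture.

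Next I would analyze a CR regular point $q \in Y$, i.e.\ a point where $X^U$ is a real-analytic submanifold of dimension $2n-k$ that is CR. Near such a $q$ we may use a local real-analytic defining function $\rho$ (the restriction of the global $\rho$, which by Proposition~\ref{prop:goodnbhd}(iv) generates $I_0(X)$ but, caution, not necessarily $I_q(X)$). The key computation: at a CR point the CR dimension equals $n - \operatorname{rank} \bar\partial\rho$ (as used in the proof of Proposition~\ref{pro:CRdimconstant}), and the Segre variety $\Sigma_q^U$ near $q$ is cut out by $z \mapsto \rho(z,\bar q)$, whose differential at $z=q$ is the holomorphic part $\partial\rho|_q$; by reality of $\rho$ its rank equals $\operatorname{rank}\bar\partial\rho|_q$. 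Hence the dimension of $\Sigma_q^U$ near $q$ is at least $n - \operatorname{rank}\bar\partial\rho|_q = $ (CR dimension at $q$). Combined with $\dim\Sigma_q^U = n-k$ from maximal nondegeneracy, this would show the CR dimension of $X^U$ at $q$ is at least $n-k$; but the CR dimension plus the CR codimension can be at most $\ldots$ — more precisely, for a submanifold of real dimension $2n-k$, CR dimension $d$ satisfies $2d \le 2n-k$ and codimension-of-CR-tangent considerations give $d + (\text{number of independent holomorphic differentials}) \le n$, so CR dimension $\ge n-k$ together with real dimension $2n-k$ forces CR dimension exactly $n-k$ and CR codimension exactly $k$ with the complex tangent space having complex codimension $k$ — i.e.\ $q$ is a generic point. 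Once every point of $Y$ is generic, $Y$ is a generic submanifold, hence \ourgeneric{q} at each of its points, and since genericity of a submanifold means its intrinsic complexification at each point is all of $(\C^n,q)$, upper-semicontinuity of the dimension of the intrinsic complexification (the preceding proposition) extends this to $\overline Y \cap U$.

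For the final sentence, I would invoke the fact stated right after the coherence proposition that in an irreducible coherent subvariety the set of regular points of top dimension is dense; applied here, $\overline{Y}\cap U = X^U$ when $X^U$ is coherent (using also that the top-dimensional regular locus of a coherent irreducible variety is, generically, CR — one should check that the CR-singular locus is nowhere dense, which follows since the CR-singular set is a proper real-analytic subset when the intrinsic complexification is not all of $\C^n$; but here we will have already shown it is). Thus $X^U$ is \ourgeneric{p} for every $p\in U$.

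The main obstacle I anticipate is the inequality direction relating $\dim\Sigma_q^U$ to the CR dimension at a CR regular point $q$: the subtlety is that the global $\rho$ need not generate $I_q(X^U)$ away from the origin (Proposition~\ref{prop:goodnbhd}(vii) explicitly warns of this), so $\rho(\cdot,\bar q)=0$ might cut out something strictly larger than the ``true'' local Segre variety at $q$, and one has to be careful that the rank computation $\operatorname{rank}\partial\rho|_q = \operatorname{rank}\bar\partial\rho|_q$ still controls the CR dimension correctly. I expect this is handled by noting that at a CR regular point the complexification $\sX^U$ is itself smooth of the right dimension near $(q,\bar q)$ (since $X^U$ is a CR submanifold there, its complexification is a manifold), so $\partial\rho|_q$ does have the expected rank $k' \ge k$, and then the maximal-nondegeneracy hypothesis $\dim\Sigma_q^U = n-k$ pins down $k'=k$; making this rigorous — reconciling the possibly-non-minimal defining function $\rho$ with the intrinsic geometry at $q$ — is where the real work lies.
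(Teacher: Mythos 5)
Your overall skeleton (show that at points of $Y$ the CR dimension equals $n-k$, then pass to $\overline{Y}\cap U$ by upper semicontinuity, then use density of top-dimensional CR regular points in the coherent case) is the right one, and the last two steps as you describe them are fine. But the central step fails as written. What must be proved is that the CR dimension at $q\in Y$ is \emph{at most} $n-k$: for any CR submanifold of real dimension $2n-k$ one automatically has CR dimension $\ge n-k$ (since $\dim_\R(T_qM\cap JT_qM)\ge 2(2n-k)-2n$), and this inequality does \emph{not} force genericity (e.g.\ $\{z_3=0,\ \Im z_2=0\}\subset\C^3$ has real dimension $2n-k$ with $k=3$ and CR dimension $1>n-k$). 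So your concluding deduction ``CR dimension $\ge n-k$ together with real dimension $2n-k$ forces CR dimension exactly $n-k$'' is false, and in fact your displayed inequalities, if correct, would give the opposite (useful) bound. Worse, the inequality they rest on is not true: for a variety $\{\rho(\cdot,\bar q)=0\}$ the rank $r$ of the differentials at $q$ bounds the local dimension from \emph{above} ($\dim_q\le n-r$), not from below; e.g.\ $\{z_1=0,\ z_1-z_2^2=0\}\subset\C^2$ has rank $1$ at the origin but dimension $0$, and the cusp $\{z_2^2=z_1^3\}$ has rank $0$ but dimension $1$. So ``$\dim\Sigma_q^U\ \ge\ n-\operatorname{rank}\partial\rho|_q$'' cannot be used, and the equality ``CR dimension $=n-\operatorname{rank}\bar\partial\rho|_q$'' requires $\rho$ to generate $I_q(X^U)$, which, as you yourself note, is exactly what is not known.

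The repair you sketch for that last point also does not work: smoothness (or even correct identification) of $\sX^U$ at $(q,\bar q)$ for $q\ne 0$ is precisely what noncoherence destroys — Proposition~\ref{prop:goodnbhd}(vii) warns that $(\sX^U,(q,\bar q))$ need not be the complexification of $(X^U,q)$, and the paper cites an example (Example 2.5 of the reference \cite{MR4407043}) where the precomplexification is singular at regular points of $X$. The missing idea, which is the heart of the paper's proof, is to work on the complexification instead: by Proposition~\ref{prop:goodnbhd}(vi) the complexified $\rho$ generates $\sI_{(p,q)}(\sX^U)$ at \emph{every} point of $\sX^U$; since $\sX^U$ is the smallest complex variety containing $\iota(X^U)$, the set $\iota(X^U)$ is not contained in $\sing\sX^U$, and irreducibility of $X^U$ (from irreducibility of $(X,0)$ and goodness of $U$) makes the exceptional set a proper subvariety of $X^U$. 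Hence on an open dense subset $Y'\subset Y$ the differentials of $\rho$ have rank $k$, so there $\rho$ does generate $I_p(X^U)$, the germ $(\Sigma_p^U,p)$ is the honest Segre variety of the CR manifold, and the standard fact ``Segre dimension $n-k$ for a codimension-$k$ CR submanifold implies generic'' applies; density of $Y'$ in $Y$ together with local constancy of the CR dimension then gives genericity on all of $Y$. Without this detour through $\sX^U$ and the dense subset $Y'$, the rank/ideal issue you flagged remains unresolved, so the proposal has a genuine gap at its key step.
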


The example \ref{ex:cartanumbrella} shows that $X_{\rm reg}^U$ may fail to be generic in an open subset, as the set where $z=0$, $w\neq 0$ is a complex manifold, while $X$ is maximally Segre nondegenerate.

\begin{proof}
Let $\rho = (\rho_1,\ldots,\rho_\ell)$ be the defining functions from Proposition~\ref{prop:goodnbhd}.
As these functions are the defining functions for $\sX^U$, they generate the
ideals $\sI_{(p,q)}(\sX^U)$ for all $(p,q) \in \sX^U$.  The (complex)
dimension of $\sX^U$ is $2n-k$, and hence the differentials of the functions
$\rho_1,\ldots,\rho_\ell$ span a $k$-dimensional set at all the regular points
of $\sX^U$.  The set $X^U$ cannot be contained in the singular set of
$\sX^U$ as $\sX^U$ is the smallest complex analytic subvariety of
$U \times U^*$ that contains $\iota(X^U)$.  That means that the set of points of
$X^U$ where the differentials of $\rho_1,\ldots,\rho_\ell$ have rank $k$
is $X^U \setminus S$ for some proper real subvariety $S \subset X^U$.  We
assumed that $(X,0)$ is irreducible, and so as $U$ is good for $(X,0)$, the
subvariety $X^U$ is irreducible.  Thus, $\dim S < \dim X^U$.  That is,
$\rho_1,\ldots,\rho_\ell$ have rank $k$ on
an open dense set of the regular points of dimension $2n-k$.
An open dense set of those points are points where $X^U$ is a
CR submanifold, denote this set by $Y'$.  Note that $Y'$ is open and dense
in $Y$.

As $\rho_1,\ldots,\rho_\ell$ generate the ideal $I_p(X^U)$
for all $p \in Y'$, we have that the germ
$(\Sigma^U_p,p)$ is the germ of the Segre variety of the germ $(Y',p)$.
As we are dealing with a CR submanifold, it is standard that as 
$\Sigma^U_p$ has dimension $n-k$, that $(Y',p)$ is a generic submanifold
near $p$.
As $Y'$ is open and dense in $Y$, we find that $Y$ is a generic submanifold
at all points.

If $X^U$ is in addition coherent, then
$X^U_{\rm{reg}}$ are all of dimension $2n-k$. In addition, Proposition~\ref{pro:CRdimconstant} shows that 
the CR dimension at points of $Y$ is constant,  and hence $X^U$ is \ourgeneric{p}
for every $p \in U$.
\end{proof}

\begin{example}
 Let $X \subset \C^2$ be the Cartan umbrella from Example~\ref{ex:cartanumbrella} defined by
$s(x^2+y^2) = x^3$, then even though incoherent, $X$ is maximally Segre nondegenerate.  
At the points of $X$ where $z\not=0$, the variety is a real-analytic submanifold of dimension 3 (hypersurface) 
and so its intrinsic complexification is $(\C^2,p)$, meaning it is two-dimensional.  However,
at points where $z=0$ and $s\not= 0$, the subvariety is simply the complex line
$z=0$, and hence this line is its intrinsic complexification at these points and thus
the intrinsic complexification at those points is of dimension 1.
Thus, for incoherent varieties, the dimension of the intrinsic complexification need not
be constant.
\end{example}

Let us prove that maximally Segre nondegenerate means that we can solve for the right
number of barred variables. 

\begin{prop}
Suppose $(X,0) \subset (\C^n,0)$ is an irreducible germ of
a maximally Segre nondegenerate subvariety of codimension $k$.
Then after perhaps a complex linear change of coordinates,
there is a good neighborhood
$U = U' \times U'' \subset \C^{n-k} \times \C^k$
and a representative $X^U \subset U$,
such that given any $z \in U$ and $\xi' \in U'$,
there exist finitely many points $\xi'' \in U''$
such that $(z,\xi',\xi'') \in \sX^U$.  In other words,
the projection
from $\sX^U$ to $\C^n \times \C^{n-k}$ is finite.
\end{prop}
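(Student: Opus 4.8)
The plan is to realize $\sX^U$ as a finite branched covering of an open subset of $\C^n_z\times\C^{n-k}_{\xi'}$ via the local parametrization theorem; the only place the hypothesis enters is in bounding the dimension of the slice of $\sX^U$ over $\{z=0\}$, and there the key is the conjugation symmetry of $\sX^U$ together with maximal Segre nondegeneracy.

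I begin with a good neighborhood $U_0$ of the origin for $(X,0)$ and defining functions $\rho=(\rho_1,\dots,\rho_\ell)$ as in Proposition~\ref{prop:goodnbhd}, so $\sX^{U_0}=\{(z,\xi)\in U_0\times U_0^*:\rho(z,\xi)=0\}$ is irreducible of complex dimension $2n-k$. The map $\sigma(z,\xi)=(\bar\xi,\bar z)$ is an anti-holomorphic automorphism of $U_0\times U_0^*$ fixing $\iota(\C^n)$ pointwise, so $\sigma(\sX^{U_0})$ is again a complex subvariety of $U_0\times U_0^*$ containing $\iota(X^{U_0})$, and by minimality $\sigma(\sX^{U_0})=\sX^{U_0}$. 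Since $\sigma$ interchanges $\{\xi=0\}$ with $\{z=0\}$, and $\sX^{U_0}\cap\{\xi=0\}=\Sigma_0^{U_0}\times\{0\}$ (because $\bar0=0$ forces $(z,0)\in\sX^{U_0}\iff\rho(z,0)=0\iff z\in\Sigma_0^{U_0}$), we obtain
\[
\sX^{U_0}\cap\bigl(\{0\}\times\C^n_\xi\bigr)=\sigma\bigl(\Sigma_0^{U_0}\times\{0\}\bigr)=\{0\}\times(\Sigma_0^{U_0})^{*}.
\]
Writing $W:=\{\xi:\rho(0,\xi)=0\}$ this reads $W=(\Sigma_0^{U_0})^{*}$, so passing to germs at the origin, $\dim(W,0)=\dim\Sigma_0\le n-k$, the inequality following from maximal Segre nondegeneracy.

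Next I choose coordinates. A complex-linear change $z\mapsto Bz$ on $\C^n$ changes the conjugated variable by $\xi\mapsto\bar B\xi$, and since $B\mapsto\bar B$ is onto $GL(n,\C)$ this lets us realize an arbitrary linear change of the $\xi$-coordinates while retaining all of $z$. Because $\dim(W,0)+k\le n$, a standard genericity argument yields a $k$-dimensional subspace $L''\subset\C^n_\xi$ meeting $(W,0)$ only at the origin: a generic $\bP^{k-1}\subset\bP^{n-1}$ misses the projectivized tangent cone $\bP(C_0W)$, which has dimension $\le n-k-1$, and this forces $L''\cap W=\{0\}$ as germs. Change coordinates so that $L''=\{\xi:\xi'=0\}$ for a splitting $\xi=(\xi',\xi'')\in\C^{n-k}\times\C^k$. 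Then the projection $\pi(z,\xi',\xi'')=(z,\xi')$ has kernel $\{0\}_z\times L''$, so $\pi^{-1}(0)\cap\sX^{U_0}=\{0\}\times(L''\cap W)$ is just the origin as a germ. As $\sX^{U_0}$ is pure of dimension $2n-k=\dim(\C^n\times\C^{n-k})$, the local parametrization theorem furnishes polydiscs $V_z,U',U''$ about the respective origins with $\sX^{U_0}\cap(V_z\times U'\times U'')\to V_z\times U'$ proper and finite-to-one. Shrinking $U',U''$ so that $U:=U'\times U''\subset\C^n$ is good for $(X,0)$ with the same $\rho$ (the last assertion of Proposition~\ref{prop:goodnbhd}) and $U\subset V_z$, we get $\sX^U=\sX^{U_0}\cap(U\times U^*)$, and for all $z\in U$ and $\xi'\in U'$ the set $\{\xi''\in U'':(z,\xi',\xi'')\in\sX^U\}$ lies in a fiber of this covering and is therefore finite --- which is the assertion.

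I expect the main obstacle to be the dimension bound $\dim\bigl(\sX^U\cap\{z=0\}\bigr)\le n-k$. This is exactly where \emph{maximal} Segre nondegeneracy is essential: were $\dim\Sigma_p^U$ a larger constant $m$, the same argument would only eliminate $n-m$ of the barred variables. The symmetry $\sigma$ is what converts the hypothesis --- a statement about the horizontal Segre slices $\sX^U\cap(\C^n_z\times\{\bar p\})$ --- into a statement about the vertical slice $\sX^U\cap(\{0\}\times\C^n_\xi)$; everything past that is Noether normalization plus routine shrinking of neighborhoods.
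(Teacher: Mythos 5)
Your proof is correct, and it reaches the conclusion by a mildly but genuinely different route than the paper. The paper first normalizes coordinates so that the projection of $\Sigma_0^U$ onto the first $n-k$ coordinates is finite, asserts that this persists for $\Sigma_p^U$ for all $p$ in a small enough $U$, and then concludes fiberwise by identifying the fiber of $\sX^U$ over $z$ with (the conjugate of) the Segre variety $\Sigma_{\bar z}^U$. You instead make a single application of the local parametrization/finiteness theorem to the full projection $\sX^{U_0} \to \C^n_z \times \C^{n-k}_{\xi'}$, verifying the isolated-fiber condition only at the origin; the conjugation symmetry $\sigma(z,\xi)=(\bar\xi,\bar z)$, which fixes $\iota(\C^n)$ and hence preserves $\sX^{U_0}$ by minimality, converts maximal Segre nondegeneracy (a statement about the slice $\{\xi=0\}$) into the needed bound $\dim\bigl(\sX^{U_0}\cap\{z=0\},0\bigr)=\dim\Sigma_0\le n-k$, and a generic $k$-plane $L''$ missing the tangent cone does the Noether normalization. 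What this buys is that the uniformity in $p$, which the paper handles with the somewhat implicit step ``then $\Sigma_p^U$ has this property for any $p$ near $0$, and we can assume $U$ is small enough,'' comes for free from properness of the one branched covering; the cost is invoking the parametrization theorem for the $(2n-k)$-dimensional $\sX$ rather than arguing on the $(n-k)$-dimensional Segre varieties themselves, which is the paper's more CR-geometric phrasing. Two small points to make explicit: when shrinking, take the good neighborhood to be a polydisc so that it is of product form $U'\times U''$ and conjugation-symmetric (so $U^*=U$ and ``$\xi'\in U'$, $\xi''\in U''$'' matches the statement), and note that since $\dim\Sigma_p^{U}\le\dim\Sigma_p^{U_1}$ under shrinking of (good into nondegenerate) representatives, the inequality $\dim\Sigma_0\le n-k$ you use is indeed what maximal Segre nondegeneracy of the germ provides.
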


\begin{proof}
Let $\pi \colon \C^n \to \C^{n-k}$ be the projection onto the first $n-k$
coordinates.
Do a linear change of coordinates so that $\pi|_{\Sigma_0^U}$ is a finite map,
that is, for a given small enough $U = U' \times U''$ given any $z' \in U'$
there are finitely many $z'' \in U''$ such that $(z',z'') \in \Sigma_0^U$.
Then $\Sigma_p^U$ has this property for any $p$ near $0$, and we can
assume that $U$ is small enough so that it is true for all $p \in U$.

The result follows by considering the
Segre variety $\Sigma_{\bar{z}}^U$  as $z$ varies as the fiber of the projection onto the first $n$ coordinates of $\mathcal{X}^U$.  Above each $z$, given $\xi'$
we look for the solution $\xi''$ in each Segre variety.
\end{proof}

The following example shows that the dimension of the intrinsic complexification 
at a CR singular point, even of a regular variety, is not governed by simple rank or dimension 
considerations and takes into account more subtle structure.

\begin{example} \label{example:nonalgex}
Let $M \subset \C^3$ in coordinates $(z,w_1,w_2)$ be given by
\begin{equation}
w_1 = z e^{z\bar{z}}, \qquad w_2 = z e^{\lambda z\bar{z}}
\end{equation}
for some irrational $\lambda$.  This $M$ is $2$-dimensional CR singular
submanifold of $\C^3$.  It is totally-real at points where 
$\frac{\partial}{\partial \bar{z}} \left[
z e^{z\bar{z}}
\right]$ and
$\frac{\partial}{\partial \bar{z}} \left[
z e^{\lambda z\bar{z}}
\right]$ are nonzero, meaning $M$ is CR except for the origin.  At the
origin, the CR dimension is 1.  At all points outside the origin, being a 2-dimensional
totally-real manifold means that the (germ of the) intrinsic complexification is 2
dimensional.  Let us show that the intrinsic complexification at the origin
is 3-dimensional, that is, $(\C^3,0)$.  We only need to show that any
germ of a holomorphic function vanishing on $M$ at the origin vanishes
identically.  So suppose
\begin{equation}
f(z,w_1,w_2) =
\sum_{d=0}^\infty
f_d(z,w_1,w_2)
=
\sum_{d=0}^\infty
\sum_{j+k+\ell=d}
c_{jk\ell} z^j w_1^k w_2^\ell
\end{equation}
is a holomorphic function that vanishes on $M$ expanded in homogeneous parts.
In other words,
\begin{equation}
0=
\sum_{d=0}^\infty
\sum_{j+k+\ell=d}
c_{jk\ell} z^j (z^k e^{k z\bar{z}}) (z^\ell e^{\ell \lambda z\bar{z}})
=
\sum_{d=0}^\infty
z^d
\sum_{j+k+\ell=d}
c_{jk\ell} e^{(k+\lambda \ell) z\bar{z}} .
\end{equation}
Writing $z= re^{i\theta}$, we find
\begin{equation}
0=
\sum_{d=0}^\infty
r^d e^{id\theta}
\sum_{j+k+\ell=d}
c_{jk\ell} e^{(k+\lambda \ell) r^2} .
\end{equation}
By uniqueness of Fourier series, we get that for every $d$,
\begin{equation}
0=
\sum_{j+k+\ell=d}
c_{jk\ell} e^{(k+\lambda \ell) r^2} .
\end{equation}
If $\lambda$ is irrational, then again by uniqueness of Fourier series, we
find that
$c_{jk\ell} = 0$ for all $j,k,\ell$.

So we have a nonalgebraic example where the intrinsic complexification is
not of constant dimension.  It is coherent as it is a submanifold.

Let us also note that generally, at the totally-real points, the Segre
variety is discrete (dimension 0).  However, at the origin, we plug in $0$
for the barred variable in
all 4 equations,
\begin{equation}
w_1 = z e^{z\bar{z}}, \quad w_2 = z e^{\lambda z\bar{z}}, \quad
\bar{w}_1 = \bar{z} e^{z\bar{z}}, \quad \bar{w}_2 = \bar{z} e^{\lambda z\bar{z}},
\end{equation}
to get
\begin{equation}
w_1 = z, \quad w_2 = z, \quad 0=0, \quad 0=0 .
\end{equation}
That is, the Segre variety at the origin, $\Sigma_0$, is one-dimensional.

We remark that if $\lambda$ is rational, then the intrinsic complexification at
the origin is 2-dimensional.  Therefore, the dimension of the intrinsic complexification depends on
$\lambda$ and not simply the dimension of $\Sigma_0$.
\end{example}

We now gather positive results for the dimension of the intrinsic complexification. 

\begin{lemma} \label{lemma:constsigma}
Suppose $X \subset U \subset \C^n$ is an
irreducible real-analytic subvariety, $U$ is a good neighborhood for $X$ at $p \in X$,
and $X$ is Segre nondegenerate,
then the intrinsic complexification of $X$ in $U$
is given by the projection of the complexification $\sX^U$ onto the first $n$ components.

Furthermore, the dimension of the intrinsic complexification at $p$ in $U$ is $\dim_\R X - \dim_\C \Sigma_p$.
\end{lemma}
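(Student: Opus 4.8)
The plan is to identify the intrinsic complexification $Y$ of $X$ in $U$ with $\pi(\sX^U)$, where $\pi \colon \C^n \times \C^n \to \C^n$ is the projection onto the first $n$ coordinates. First I would observe that $\pi(\sX^U) \supset \pi(\iota(X^U)) = X^U$, so any complex-analytic subvariety containing $X$ must contain the closure of $\pi(\sX^U)$; the key point is therefore that $\overline{\pi(\sX^U)}$ is itself complex-analytic and minimal. Since $U$ is good for $X$ (and $(X,0)$, hence $X^U$, is irreducible), $\sX^U$ is an irreducible complex-analytic subvariety of $U \times U^*$. By proper mapping / Remmert-type arguments, one cannot directly conclude $\pi(\sX^U)$ is analytic without a properness hypothesis, so I would instead argue germ-wise and use Segre nondegeneracy to control the fiber dimension of $\pi$ restricted to $\sX^U$.

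The main mechanism is the observation, essentially the last displayed remark in the proof of the preceding proposition, that the fiber $\pi^{-1}(z) \cap \sX^U$ is exactly $\Sigma_{\bar z}^U$ (identified inside the second factor). Indeed $(z,\xi) \in \sX^U$ with $\xi = \bar p$ means $z \in \Sigma_p^U$, so fixing the first coordinate $z$ and varying $\xi$ traces out $\{\xi : (z,\xi) \in \sX^U\}$, which is the complex conjugate of a Segre variety of the "conjugate" variety; in any case its dimension equals $\dim_\C \Sigma_p^U$ for the relevant $p$. Segre nondegeneracy says this dimension is the constant $\dim_\C \Sigma_p$ for all points. Hence $\pi|_{\sX^U}$ has constant fiber dimension $d := \dim_\C \Sigma_p$, and by the fiber-dimension theorem for holomorphic maps between complex spaces the image $\pi(\sX^U)$ is (locally) a complex-analytic subvariety of pure dimension $\dim_\C \sX^U - d$. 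By Proposition~\ref{prop:goodnbhd}(viii), $\dim_\C \sX^U = \dim_\R X^U = \dim_\R X$, giving $\dim_\C \pi(\sX^U) = \dim_\R X - \dim_\C \Sigma_p$, which is the asserted dimension formula.

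To finish I would show $Y := \pi(\sX^U)$ is the \emph{smallest} complex subvariety containing $X$. Containment $X \subset Y$ is immediate. For minimality, suppose $Z \subset U$ is complex-analytic with $X^U \subset Z$. Complexifying, $\iota(X^U) \subset Z \times U^*$ wait — rather, since $Z$ is complex, the defining holomorphic functions of $Z$ vanish on $X$, hence (after complexification, using that $U$ is good so that $\rho$ generates the ideal sheaf of $\sX^U$ at every point) they vanish on all of $\sX^U$; therefore $\sX^U \subset Z \times \C^n$, and applying $\pi$ gives $Y = \pi(\sX^U) \subset Z$. This uses part (vi) of Proposition~\ref{prop:goodnbhd} in an essential way, and this is where goodness of $U$ (not just irreducibility) is needed.

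\textbf{Main obstacle.} The delicate point is upgrading "constant fiber dimension" to "$\pi(\sX^U)$ is a genuine complex-analytic subvariety of $U$", rather than merely a constructible or locally analytic set; Segre nondegeneracy is exactly the hypothesis that rescues this, but one must be careful that the fibers $\pi^{-1}(z)\cap\sX^U$ really are (conjugates of) Segre varieties \emph{uniformly in $z \in U$}, not only at points of $X^U$ — this is why the definition of Segre nondegeneracy quantifies over all $p \in U$, and I would flag that subtlety explicitly rather than gloss over it.
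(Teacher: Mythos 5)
Your argument is correct and is essentially the paper's own proof: project $\sX^U$ onto the first $n$ coordinates, observe that the fibers are (conjugates of) Segre varieties whose dimension is constant by Segre nondegeneracy, conclude the image is an analytic subvariety of dimension $\dim_\C \sX^U - \dim_\C \Sigma_p^U = \dim_\R X - \dim_\C \Sigma_p$, and obtain minimality by noting that any holomorphic $f$ vanishing on $X$ has $f\circ\pi_1$ vanishing on $\sX^U$ (by minimality of $\sX^U$, so your appeal to part (vi) is heavier than needed) and hence $f$ vanishes on the image. The only difference is that where you invoke a generic ``fiber-dimension theorem'' and flag the analyticity of the image as the delicate point, the paper settles exactly that step by citing the local version of the semi-proper mapping theorem, \cite[Chapter 7, Theorem 11E]{Whitney:book}.
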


\begin{proof}
    Denote by $\pi_1 \colon \C^{2n}  \to \C^n $ the 
      projection onto the first $n$ coordinates. Applying the local version of the semi-proper mapping theorem \cite[Chapter 7, Theorem 11E]{Whitney:book} we see that $\pi_1|_{\sX^U} (\sX^U)$ is a complex-analytic variety in $U$ of dimension $\dim_\C \sX^U - \dim_\C \Sigma_p^U  =\dim_\R X - \dim_\C \Sigma_p$.

    If $f\colon U \to \C$ is a holomorphic function vanishing on $X$, 
    its extension $f\circ \pi_1$ to $U\times U^*$ vanishes on $\sX^U$, 
    hence $f$ vanishes on $\pi_1 (\sX^U)$. Hence $\pi_1 (\sX^U)$ is the intrinsic complexification of $X$ at $p$ in $U$. 
\end{proof}

\begin{thm} \label{thm:constsigma}
If $U \subset \C^n$ is a domain and
$X \subset U$ is a coherent irreducible real-analytic subvariety where the
Segre variety germ $\Sigma_p$ is of constant dimension for $p \in X$, then the intrinsic complexification of $X$ at every point is of the same dimension.

Similarly, if near some point $X$ is a real-analytic CR
submanifold of CR dimension $\nu$ and dimension $n-d$, then the intrinsic complexification
of $X$ at every point is of dimension $n-d-\nu$.
\end{thm}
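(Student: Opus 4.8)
The plan is to deduce both assertions from Lemma~\ref{lemma:constsigma}. Once we know, for every $p\in X$, that the intrinsic complexification of $(X,p)$ has dimension $\dim_{\R}X-\dim_{\C}\Sigma_p$, the conclusions follow at once: $\dim_{\R}X$ is a single number by irreducibility (and in the coherent case it also equals the germ dimension $\dim_{\R}(X,p)$ at every point, since the top-dimensional regular points are dense), while $\dim_{\C}\Sigma_p$ is assumed constant. So the substance of the proof is to place oneself, uniformly in $p$, in a position to apply Lemma~\ref{lemma:constsigma}.

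For the first statement, fix $p\in X$. Using coherence, I would invoke the proposition above on coherent germs to choose a good neighborhood $U$ of $p$ that is simultaneously good for $X^U$ at every one of its points; then a single tuple $\rho$ cuts out $\sX^U=\{\rho=0\}$, which is the honest complexification of $X$ at each point of $X^U$, and $\sX^U$ is irreducible. Writing $\pi_1$ for the projection onto the unbarred coordinates, the germ at $p$ of $\pi_1(\sX^U)$ is the intrinsic complexification of $(X,p)$: it is a complex-analytic set by the semi-proper mapping theorem, it contains $X$, and any holomorphic $f$ vanishing on $X$ near $p$ pulls back to $f\circ\pi_1$, which then vanishes on $\sX^U$ near $(p,\bar p)$ and hence forces $f$ to vanish on $\pi_1(\sX^U)$ near $p$. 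Its dimension is $\dim_{(p,\bar p)}\sX^U=\dim_{\R}X$ minus the dimension of the fibre of $\pi_1|_{\sX^U}$ over $(p,\bar p)$, \emph{provided that fibre has the generic dimension}. That fibre is, up to conjugation, $\Sigma_p^U$, of dimension $\dim_{\C}\Sigma_p=d_0$; I would argue that, after shrinking $U$, the constancy of $\dim_{\C}\Sigma_q=d_0$ for $q\in X^U$ forces $d_0$ to be the generic fibre dimension as well --- equivalently, that $X^U$ is Segre nondegenerate --- so that Lemma~\ref{lemma:constsigma} applies and gives the value $\dim_{\R}X-d_0$ at $p$. As $p$ was arbitrary, this is the first claim.

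For the second statement, near the point in question $X$ is a real-analytic CR submanifold, hence is coherent (real-analytic submanifolds are coherent), irreducible (connected submanifolds are irreducible), and of constant CR dimension $\nu$ by the very definition of a CR submanifold. It is a standard fact of CR geometry that for a CR submanifold the Segre variety germ at each point has complex dimension equal to the CR dimension --- one checks this directly in suitable local holomorphic coordinates at a CR point --- so the first statement applies with $d_0=\nu$, and the intrinsic complexification has dimension $\dim_{\R}X-\nu$ at every point; in the stated normalization $\dim_{\R}X=n-d$, this is $n-d-\nu$.

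The step I expect to be the main obstacle is exactly the claim inside the first argument that constancy of the Segre-germ dimension along $X$ forces it to coincide with the generic fibre dimension of $\pi_1|_{\sX^U}$, i.e.\ Segre nondegeneracy of a representative. Two points require care: the behaviour of $\Sigma_q^U$ for $q\notin X$, where it is no longer literally a Segre variety of $X$; and the possibility that $X$ already lies in a proper --- and possibly singular --- complex subvariety of $U$, in which case one should pass to (the regular locus of) that subvariety before counting fibre dimensions. Coherence must enter precisely here: the Cartan umbrella of Example~\ref{ex:cartanumbrella} is maximally Segre nondegenerate yet its intrinsic complexification changes dimension, so the statement fails without coherence; and the CR-singular submanifold of Example~\ref{example:nonalgex}, which is coherent, shows that the constant-dimension hypothesis on $\Sigma_p$ cannot be dropped either, as there the intrinsic complexification jumps in dimension at the CR-singular point.
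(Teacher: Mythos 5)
Your route is exactly the paper's: reduce both assertions to Lemma~\ref{lemma:constsigma}, use coherence to pass between the germ $\Sigma_p$ and a representative $\Sigma_p^U$ on a neighborhood that is good at every point, and settle the CR case by the standard computation that the Segre variety germ of a CR submanifold has dimension equal to its CR dimension (that part of your argument is fine and is all the paper offers for it as well). The problem is that you stop precisely at the step that makes the reduction work: you say you ``would argue'' that constancy of $\dim_\C\Sigma_q$ along $X$ forces the fibre of $\pi_1|_{\sX^U}$ over $p$ to have the generic dimension (equivalently, that a representative is Segre nondegenerate), and then in your closing paragraph you declare this the main obstacle without resolving it. As submitted, that is a genuine gap: Lemma~\ref{lemma:constsigma} is invoked with a hypothesis you have not verified, and your own hypothesis only controls germ dimensions at points of $X$.

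For comparison, the paper closes this in two short moves. First, since $X$ is connected, it suffices to show the dimension of the intrinsic complexification is \emph{locally} constant, so one works in a single good neighborhood $U$ for $X$ at a point $q$. Second, coherence gives $(\Sigma_p^U,p)=\Sigma_p$ for all $p\in X$, from which the paper concludes $\dim\Sigma_p^U=\dim\Sigma_p$ is the constant value, and then applies Lemma~\ref{lemma:constsigma} directly; it does not separately discuss $\Sigma_q^U$ for $q\notin X$, nor the passage from the germ dimension at $p$ to the dimension of the whole variety $\Sigma_p^U$ --- exactly the two delicacies you flag. So your instinct locates the crux correctly, and it is a point where the paper itself is terse; but a proof must either supply that bridging argument or at least carry out the paper's steps (the connectedness reduction plus the coherence identity $(\Sigma_p^U,p)=\Sigma_p$ feeding the constant value $d_0$ into the lemma). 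Announcing the step as an expected obstacle, as you do, leaves the first statement unproved; everything else in your proposal, including the role of Example~\ref{ex:cartanumbrella} and Example~\ref{example:nonalgex} as showing both hypotheses are needed, is consistent with the paper.
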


\begin{proof}[Proof of Theorem~\ref{thm:constsigma}]
It is sufficient to show that the dimension of the intrinsic complexification is
locally constant as $X$ is, in particular, connected.  Therefore, without loss
of generality, assume that $U$ is a good neighborhood for $X$ at some point
$q \in X$.  As $X$ is coherent, then $(\Sigma_p^U,p) = \Sigma_p$ for all
$p \in X$.  Hence, the dimension of $\Sigma_p^U$ equals the dimension of $\Sigma_p$.
Application of Lemma~\ref{lemma:constsigma} finishes the proof.
\end{proof}

Next, we consider real-algebraic subvarieties, that is, subvarieties
given by a polynomial.  Coherence is somewhat of a tricky subject for
real-algebraic subvarieties; a real-algebraic subvariety $X$ being coherent
as a real-analytic subvariety does not necessarily imply that the sheaf of
germs vanishing on $X$ at each point is generated by polynomials.
Therefore, we will say that a real subvariety $X$ has
\emph{polynomial defining functions} if there exists a single set of polynomials
$r_1,\ldots,r_k$ whose zero set is $X$ and whose germs define
$I_p(X)$ for every point $p \in X$.

\begin{thm} \label{thm:alhintcplx}
If $X \subset \C^n$ is an irreducible real-algebraic subvariety
with polynomial defining functions,
then the intrinsic complexification at every point $p \in X$ has the same
dimension.

In particular, if near some point, $X$ is a real-analytic CR
submanifold of CR dimension $\nu$ and dimension $n-d$, then the intrinsic complexification
of $X$ at every point is of dimension $n-d-\nu$.
\end{thm}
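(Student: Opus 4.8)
The plan is to carry out the projection argument of Lemma~\ref{lemma:constsigma} simultaneously at every point of $X$, the hypothesis of polynomial defining functions playing the role that coherence plays in Theorem~\ref{thm:constsigma}. Let $r_1,\dots,r_k$ be polynomials whose germs generate $I_p(X)$ at every $p\in X$, and set $\sX:=\overline{\iota(X)}^{\mathrm{Zar}}\subseteq\C^{2n}$, the Zariski closure of $\iota(X)$. Since the polynomials $r_i(z,\xi)$ vanish on $\iota(X)$ we have $\iota(X)\subseteq\sX\subseteq\{r_i(z,\xi)=0\}$, and recentering Proposition~\ref{prop:goodnbhd} at each $p$ (legitimate because the $r_i$ converge everywhere and generate $I_p(X)$) shows that the germ of $\{r_i(z,\xi)=0\}$ at $(p,\bar p)$ is the complexification of $(X,p)$; since the complexification is the smallest complex-analytic germ through $\iota(X)$, the germ of $\sX$ at $(p,\bar p)$ equals this complexification for every $p\in X$. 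As $X$ is irreducible, so are $\iota(X)$ and hence $\sX$, and Proposition~\ref{prop:goodnbhd}(viii) applied through a good neighborhood at any point gives $\dim_\C\sX=\dim_\R(X,p)$ for all $p$; in particular $X$ is pure-dimensional of real dimension $\delta:=\dim_\C\sX$.

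Now let $\pi_1\colon\C^{2n}\to\C^n$ be the projection onto the first $n$ coordinates, $Y:=\overline{\pi_1(\sX)}$ its (irreducible, complex-algebraic) image closure, $f$ the generic fiber dimension of $\pi_1|_\sX$, and $m:=\dim_\C Y=\delta-f$. Exactly as in the proof of Lemma~\ref{lemma:constsigma} --- the local semi-proper mapping theorem to see that the image is a complex-analytic set, together with the observation that $g\circ\pi_1$ vanishes on the complexification of $(X,p)$ whenever a holomorphic germ $g$ vanishes on $X$ --- the intrinsic complexification of $(X,p)$ equals the germ at $p$ of $\pi_1\bigl(\sX\cap(U_p\times U_p^*)\bigr)$ for a good neighborhood $U_p$; since this set is contained in $Y$, its dimension is at most $m$. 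It remains to establish the matching lower bound at every $p\in X$.

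For the lower bound I would use the conjugation symmetry $\tau(z,\xi)=(\bar\xi,\bar z)$, which preserves $\sX$, to identify the fiber of $\pi_1|_\sX$ over $z$ with the conjugate of the Segre set $\{w:(w,\bar z)\in\sX\}$, so $f$ is the generic Segre dimension and, by upper semicontinuity of fiber dimension, the locus where the Segre dimension is $\ge f+1$ pulls back to a proper Zariski-closed subset $Z$ of the irreducible $\sX$, necessarily of dimension $<\delta$. Because $(\sX,(p,\bar p))$ is the complexification of $(X,p)$, i.e.\ the smallest complex-analytic germ containing $\iota(X)$ there, $\iota(X)$ cannot lie inside the lower-dimensional germ $(Z,(p,\bar p))$, so there exist points $z_0\in X$ arbitrarily close to $p$ whose fiber under $\pi_1|_\sX$ has dimension exactly $f$; at such a $z_0\in U_p$ the fiber-dimension inequality for $\pi_1|_{\sX\cap(U_p\times U_p^*)}$ yields $\dim_{z_0}\pi_1\bigl(\sX\cap(U_p\times U_p^*)\bigr)\ge\delta-f=m$, and upper semicontinuity of the local dimension of a complex-analytic set propagates this bound to the limit point $p$. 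Hence the intrinsic complexification of $(X,p)$ has dimension exactly $m$ for every $p\in X$. The final assertion then follows because this now-constant dimension may be evaluated at the CR submanifold point, where the standard Segre-variety computation (as in Theorem~\ref{thm:constsigma}) produces the value $n-d-\nu$.

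The step I expect to be the main obstacle is the lower bound in the third paragraph: a priori the germ of $\pi_1\bigl(\sX\cap(U_p\times U_p^*)\bigr)$ at a special point $p$ --- one at which the Segre dimension jumps, as already occurs for $\abs{z}^2=\abs{w}^2$ --- could be of dimension strictly below the generic value $m$, so one cannot simply read the dimension off the global projection. Overcoming this rests on two facts that the polynomial-defining-functions hypothesis supplies but that fail in the non-coherent setting (such as the Cartan umbrella): that $\iota(X)$ is Zariski-dense in the irreducible variety $\sX$, which lets one find a minimal-fiber point of $X$ near every point, and that $\sX$ genuinely is the complexification at every point and not merely at the origin, which is what makes the germ-level identification of the intrinsic complexification valid uniformly in $p$.
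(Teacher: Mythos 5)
Your proposal is correct and takes essentially the same route as the paper: complexify globally using the polynomial defining functions, project by $\pi_1$, observe that the (Zariski closure of the) image is an irreducible complex-algebraic variety $Y$ of constant dimension, and compare the germ of the intrinsic complexification at each $p$ with this projection; the paper compresses the local comparison into the one assertion that $\pi_1(\sX^U)$ has the same dimension as $\pi_1(\sX)$ (via Chevalley), whereas you supply exactly the missing detail --- Zariski density of $\iota(X)$ in $\sX$ forced by the complexification property at every point, upper semicontinuity of the fiber dimension, and the rank inequality at nearby minimal-fiber points --- so your write-up is if anything more complete at the crucial step. One small caveat: your claim that the intrinsic complexification of $(X,p)$ \emph{equals} the germ of $\pi_1\bigl(\sX\cap(U_p\times U_p^*)\bigr)$ via the semi-proper mapping theorem is not justified at Segre-degenerate $p$ (Lemma~\ref{lemma:constsigma} uses Segre nondegeneracy precisely to make that image analytic), but your argument never needs this equality: the upper bound follows from $X\subset Y$ directly, and for the lower bound the pullback argument gives the one containment (image germ inside the intrinsic complexification) you actually use, with the semicontinuity of local dimension applied to the intrinsic complexification itself rather than to the image.
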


\begin{proof}[Proof of Theorem~\ref{thm:alhintcplx}]
Let $\sX \subset \C^n \times \C^n$ be the algebraic complex variety given by the complexified defining polynomials.
The set $\pi_1(\sX)$ is constructible by the Chevalley theorem (see e.g. \cite[pg. 395]{MR1131081}), and so it is
contained in a complex subvariety $Y \subset \C^n$ of the same dimension.
As $X$ is irreducible, so is $\sX$ and therefore so is $Y$.

Consider $p \in X$ and a good neighborhood $U$ of $p$ for $X$.
Then $\sX^U = \sX \cap (U \cap U^*)$.  Then $\pi_1(\sX^U)$ has the same dimension
as $\pi_1(\sX)$ and therefore the same dimension as $Y$.  As $Y$ is irreducible it
has the same dimension at all points.  Therefore, the intrinsic complexification
also has the same dimension at all points.

The second conclusion of the theorem follows then by standard CR results.
\end{proof}

We remark that the proof does not mean that the intrisic complexification
at a point is always equal to the germ of $\pi_1(\sX)$ at $p$ as that could
possibly contain more components, however, at most points these two germs
are equal.

\begin{thm}
If $U \subset \C^n$ is a domain and
$X \subset U$ is an irreducible real-analytic subvariety.  Let $X_{top}$ be the
top dimensional points.
If the Segre variety germ $\Sigma_p$ is of constant dimension on $X_{top}$,
then the dimension of the intrinsic complexification of $X$ is
constant on $X_{top}$.
\end{thm}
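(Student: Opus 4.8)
The plan is to reduce the statement to showing that the function $p \mapsto I_p$, where $I_p$ denotes the dimension of the intrinsic complexification of the germ $(X,p)$, is constant on $X_{top}$; by the earlier proposition identifying, near a point, the intrinsic complexification of a germ with that of a small good representative, this is the content of the theorem. Write $d = \dim_{\R} X$, so $X_{top} = \{p \in X : \dim_p X = d\} = \overline{X_{\rm reg}\cap X_{top}}$, and let $Y$ be the intrinsic complexification of $X$ in $U$. Since $X$ is irreducible, $Y$ is an irreducible complex-analytic subvariety (if $Y = Y_1\cup Y_2$ then $X$ is the union of the proper real-analytic subvarieties $X\cap Y_i$); write $e = \dim_\C Y$. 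As $Y$ is irreducible, $\dim_p Y = e$ for all $p \in Y$, hence $I_p \le e$ for every $p \in X$.

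First I would pin down the generic value of $I_p$ on $X_{top}$. The set $X_{\rm reg}\cap X_{top}$ is dense in $X_{top}$, and since it is subanalytic its CR locus has locally finitely many connected components, on each of which $X$ is a real-analytic CR submanifold of constant CR dimension; at a CR point of a submanifold the CR dimension equals $\dim\Sigma_p$, so the hypothesis that $\dim\Sigma_p$ is constant on $X_{top}$ forces this common value to be a single number $\nu$. Let $V\subseteq X_{top}$ be the union of these components, an open dense subset of $X_{top}$. By standard CR theory (the intrinsic complexification of a real-analytic CR submanifold is a complex submanifold in which it is generic; cf.\ the second parts of Theorems~\ref{thm:constsigma} and~\ref{thm:alhintcplx}), for $p\in V$ the intrinsic complexification of $(X,p)$ is a germ of a complex submanifold of dimension $m := d - \nu$. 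Thus $I_p = m$ for $p \in V$, and in particular $m \le e$. Combining this with upper semicontinuity of $p \mapsto I_p$ (the earlier proposition) and $\overline V = X_{top}$ gives the easy inequality $I_p \ge m$ for all $p \in X_{top}$.

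It remains to prove $I_p \le m$ on $X_{top}$; once this is done we obtain $m \le I_p \le e$ on $X_{top}$, and also (evaluating at a point of $V$) $e = m$, so that $I_p \equiv m$ on $X_{top}$, as claimed. The idea for the reverse inequality is to glue the local intrinsic complexifications along $V$: over each connected component $C$ of $V$ the CR submanifold $X|_C$ has a well-defined intrinsic complexification $\widehat Z_C$, a pure $m$-dimensional connected complex submanifold of a neighborhood of $C$ containing $C$; as its germ at each $p\in C$ lies in $(Y,p)$ and $\widehat Z_C$ is irreducible, $\widehat Z_C\subseteq Y$. Using local finiteness of the components of $V$, the set $\mathcal Z := \bigcup_C \widehat Z_C$ is a locally closed, pure $m$-dimensional complex-analytic subset of an open set containing $V$, with $V\subseteq \mathcal Z\subseteq Y$. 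The smallest complex subvariety of $U$ containing $\mathcal Z$ then has dimension $m$ and contains $\overline V = X_{top}$; invoking irreducibility of $X$ once more — a complex subvariety of $U$ containing $X_{top}$ must contain all of $X$ — this variety contains $X$, and minimality of $Y$ yields $e \le m$, hence $I_p \le \dim_p Y = e = m$ for every $p \in X_{top}$.

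The hard part will be this last step. Two points make it delicate, both of which are genuine manifestations of noncoherence (as in Example~\ref{ex:cartanumbrella}) and force an essential use of the global irreducibility of $X$: controlling the analytic closure of the glued manifold $\mathcal Z$ inside the irreducible variety $Y$ (so that it really is an $m$-dimensional subvariety), and showing that any complex subvariety containing the top-dimensional part $X_{top}$ automatically absorbs the lower-dimensional germ-components of $(X,p)$ at the non-CR-regular points of $X_{top}$ — equivalently, that the extra low-dimensional germs cannot make $I_p$ jump above $m$. I expect that handling these low-dimensional components carefully (for instance by decomposing $(X,p)$ into irreducible germs and estimating, via the semi-proper mapping argument of Lemma~\ref{lemma:constsigma}, the dimension of the image of each complexified component under the projection) is where the real work lies.
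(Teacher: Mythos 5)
There is a genuine gap, and you have in fact flagged it yourself: the entire content of the theorem is the inequality $I_p \le m$ at the points of $X_{top}$ that are singular or CR singular, and your proposal does not prove it — it ends with ``I expect \dots is where the real work lies.'' The easy half (density of the CR locus $V$ in $X_{top}$, $\dim\Sigma_p=$ CR dimension at CR regular points, upper semicontinuity giving $I_p\ge m$) is fine, but the gluing construction you sketch for the upper bound does not go through as stated. The union $\mathcal Z$ of the local intrinsic complexifications along $V$ need not be a closed analytic subset of any neighborhood of $V$, and the ``smallest complex subvariety of $U$ containing $\mathcal Z$'' need not have dimension $m$: the analytic closure of a non-closed union of $m$-dimensional complex manifolds can have strictly larger dimension (this kind of jump is exactly what the theorem is ruling out, so you cannot assume it away). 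Likewise, the step ``a complex subvariety of $U$ containing $X_{top}$ must contain all of $X$'' is asserted from irreducibility but not proved, and for noncoherent varieties the germ of $X$ at a point of $X_{top}$ can have lower-dimensional components not lying in the closure of the top-dimensional part — precisely the components you admit you do not know how to absorb. Example~\ref{example:nonalgex} shows that such low-dimensional pieces can have intrinsic complexification of unexpectedly large dimension, so this is not a formality.

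The paper's proof avoids all of this by being purely local and by never separating the two inequalities: fix $q\in X_{top}$, take a good neighborhood $U'$, and run the argument of Lemma~\ref{lemma:constsigma} — the semi-proper mapping theorem applied to $\pi_1|_{\sX^{U'}}$ shows that $\pi_1(\sX^{U'})$ is a complex subvariety of dimension $\dim_\R X-\dim_\C\Sigma_q$ which contains $X\cap U'$ and is its intrinsic complexification there. This single step produces the $m$-dimensional complex variety through the germ $(X,q)$ that your approach is missing (including its lower-dimensional germ components, since $\sX^{U'}$ contains $\iota(X^{U'})$ in its entirety), and constancy of $\dim\Sigma_p$ on $X_{top}$ then gives constancy of the dimension of the intrinsic complexification directly. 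You mention this projection argument only parenthetically in your last sentence; making it the main tool, rather than the global gluing, is what closes the gap.
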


\begin{proof}
Fix a point $q \in X_{top}$ and consider a good neighborhood
$U'$ of $X$ at $p$. 
The hypotheses imply that $\Sigma_p^{U'}$ is
of constant dimension for $p \in U'$, and the same argument
as that in 
Lemma \ref{lemma:constsigma} finishes the proof.
\end{proof}

\begin{remark}
If $X \subset \C^n$ is a coherent real-analytic subvariety, then an application of \cite[Theorem 1.2]{MR4407043} (the set of Segre-degenerate points of a coherent subvariety is a subvariety)
and Theorem~\ref{thm:constsigma} shows that the subset 
 where the intrinsic complexification
of $X$ is larger than the generic value is contained in a real-analytic subvariety $S\subset X$.
A consequence of
Example~\ref{example:nonalgex} says that the dimension of the Segre variety and the dimension
of the intrinsic complexification are not necessarily related near Segre-degenerate points, and hence
we expect the stratification via dimensions of the Segre variety and the dimensions of the
intrinsic complexification to be different in general.
\end{remark}

\begin{example}
For noncoherent varieties the situation is yet more complicated.
The subvariety in $\C^2$ given by
\begin{equation*}
w^3\bar{z}^3+3w^2\bar{w}z\bar{z}^2+3w\bar{w}^2z^2\bar{z} -8w^2\bar{w}^2z\bar{z}+\bar{w}^3z^3=0,
\end{equation*}
see \cite[Example 2.6]{MR4407043}.  This irreducible (noncoherent) subvariety is a hypersurface
and a complex cone, with a so-called umbrella handle $\{ z=0,w\not= 0 \}$
which are regular points of real dimension 2.
The
Segre variety at the origin is of dimension 2, the Segre variety at the regular points of
hypersurface dimension is of dimension 1, and at the points of the umbrella handle
the variety is a complex line and hence the germ of the Segre variety at those points
is 1 dimensional, and so is the intrinsic complexification, while the intrinsic complexification
at all other points is 2 dimensional.  In particular, neither the set of points where
the intrinsic complexification is of dimension 2 nor the set of points where it is of dimension 1
is a subvariety.
\end{example}

\section{Intrinsic complexification under finite maps}

The Segre varieties, the intrinsic complexification,
and the precomplexification are preserved under
finite maps, even though an image of a subvariety might not be
a subvariety.  E.g., take $X = \R \subset \C$ and consider the map
$F(z) = z^2$.  Everything will be done with respect to $U=\C$.
Then $F(X)$ is the positive real ray, and hence not a
subvariety.  However, if we take instead of the image
the smallest real subvariety containing $F(X)$, that is $S=\R$,
then we find everything preserved.  The precomplexification $\sX$ of $X$
is given by $z = \zeta$ in $\C^2$.  The map $F$ complexifies to
the map $\sF(z,\zeta) = (z^2,\zeta^2)$, which takes $\sX$
to $\sX$.  We find that $\sX \cap \iota(\C) = X$ and 
$\sF(\sX) \cap \iota(\C) = \sX \cap \iota(\C) = S$.
The intrinsic complexification of both $X$ and $S$ is $\C$,
and $F$ preserves $\C$.

\begin{prop}
Let $(X,0) \subset (\C^n,0)$ be a germ of a real-analytic subvariety
of real dimension $\ell$, and
let $(F,0) \colon (\C^n,0) \to (\C^n,0)$ be a germ of a
finite
holomorphic mapping.  Complexify $(F,0)$ to $(\sF,0)$
by considering $\sF = F \oplus \overline{F(\bar{\cdot})}$.
Let $(\sX,0)$ be the complexification of $(X,0)$.
The germ $(F(X),0)$ is a germ of a semianalytic set of real
dimension $\ell$.  Let $(S,0)$ be the smallest germ of a
real-analytic subvariety containing $(F(X),0)$.
Then
\begin{enumerate}[(i)]
\item
$(S,0)$ is of real dimension $\ell$.
\item
$(\iota(S),0)= (\sF(\sX)\cap\iota(\C^n),0)$
\item
$(\sF(\sX),0)$ is the complexification of $(S,0)$.
\item
If $(Y,0)$ is the intrinsic complexification of $(X,0)$,
then $(F(Y),0)$ is the intrinsic complexification of $(S,0)$.
\end{enumerate}
\end{prop}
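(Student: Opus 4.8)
The plan is to reduce everything to two facts: first, that the complexified map $\sF$ respects the diagonal $\iota(\C^n)$ in the sense that $\sF(\iota(z)) = \iota(F(z))$; and second, that $\sF$, being a finite holomorphic map of germs, sends germs of complex-analytic varieties to germs of complex-analytic varieties of the same dimension (by the finite mapping theorem / Remmert's theorem for finite maps). These two facts will let us transport the defining properties of $\sX$ and $Y$ for $X$ over to $S$ and $F(Y)$.

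For (i)--(iii), I would first observe that $\sF(\sX)$ is a germ of a complex-analytic subvariety of $(\C^n\times\C^n,0)$ of the same dimension as $\sX$ — which, by Proposition~\ref{prop:goodnbhd}(viii), is $\dim_\R X = \ell$ — using finiteness of $\sF$. Next, since $\iota(X) \subset \sX$ and $\sF \circ \iota = \iota \circ F$, we get $\iota(F(X)) \subset \sF(\sX)$, so $\sF(\sX)$ is a complex-analytic germ whose trace on $\iota(\C^n)$ contains $\iota(F(X))$ and hence (taking the smallest real subvariety) contains $\iota(S)$; this shows $\iota(S) \subset \sF(\sX)\cap\iota(\C^n)$. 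For the reverse inclusion and for (iii), I would use that $\sF(\sX)$, being conjugation-symmetric (because $\sF = F\oplus\overline{F(\bar\cdot)}$ is conjugation-symmetric and $\sX$ is), its trace on the diagonal is the complexification of a real-analytic germ, namely its own real points; minimality of $S$ together with $\iota(F(X))\subset\sF(\sX)\cap\iota(\C^n)$ forces $\sF(\sX)\cap\iota(\C^n) = \iota(S)$ and, by minimality of the complexification, $\sF(\sX)$ is the complexification of $(S,0)$. Part (i) then also follows since $\dim_\R S = \dim_\C \sF(\sX) = \ell$, and it also follows directly because a finite map cannot drop dimension and $F(X)\subset S$ forces $\dim S \ge \ell$ while $S$ being contained in $\sF(\sX)\cap\iota(\C^n)$ forces $\dim S \le \ell$.

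For (iv), let $(Y,0)$ be the intrinsic complexification of $(X,0)$, so $X\subset Y$ and $Y$ is the smallest complex subvariety with this property. Then $F(X)\subset F(Y)$, and $F(Y)$ is a germ of a complex-analytic subvariety of $(\C^n,0)$ by finiteness of $F$; since it is complex and contains $F(X)$, it contains $S$ (the smallest real subvariety containing $F(X)$, because any complex variety containing $F(X)$ contains the smallest real one), so $F(Y)$ contains the intrinsic complexification $Y_S$ of $(S,0)$. For the reverse inclusion, I would argue that $\pi_1(\sX)$ (the projection to the first $n$ coordinates) contains $Y$'s defining data: any holomorphic $f$ vanishing on $F(X)$ pulls back to $f\circ F$ vanishing on $X$, hence on $Y$, hence $f$ vanishes on $F(Y)$; this shows $F(Y)$ is contained in the intrinsic complexification of $S$, giving $F(Y) = Y_S$. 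The main obstacle, I expect, is the careful handling of the reverse inclusion in (ii)/(iii): one must be sure that the image germ $\sF(\sX)$ does not pick up spurious diagonal points outside $\iota(S)$, which is exactly where conjugation-symmetry of $\sF$ and the minimality characterization of $S$ must be combined; the finite-mapping input (image is a variety of equal dimension) is standard but must be invoked at the level of germs rather than on a fixed neighborhood.
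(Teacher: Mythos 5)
Your part (iv) is essentially correct and, in fact, more explicit than the paper's one-line argument: the pull-back observation that $f$ vanishing on $F(X)$ gives $f\circ F$ vanishing on $X$, hence on $Y$, hence $f$ vanishing on $F(Y)$, combined with minimality of $S$ and of the intrinsic complexification, does the job. The trouble is in (ii) and (iii). For (ii) you only prove the inclusion $\iota(S)\subset \sF(\sX)\cap\iota(\C^n)$; the reverse inclusion is the nontrivial half, and your claim that ``conjugation-symmetry of $\sF(\sX)$ together with minimality of $S$ forces equality'' is a non sequitur: minimality of $S$ only yields the inclusion you already have, and a conjugation-symmetric complex germ containing $\iota(F(X))$ can have a diagonal trace strictly larger than $\iota(S)$ (for instance $\{z\zeta=0\}\subset\C^2$ is $\sigma$-symmetric but is far from being the complexification of its real points $\{0\}$). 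What is actually needed -- and what the paper does -- is the real-analytic analogue of your own argument in (iv): for any real-analytic $\rho$ vanishing on $F(X)$, the function $z\mapsto\rho(F(z),\overline{F(z)})$ vanishes on $X$, so its complexification $\rho\circ\sF$ vanishes on $\sX$ by minimality of the complexification $\sX$; evaluating at diagonal points of $\sF(\sX)$ shows they lie in $\iota(S)$. Without this step (ii) is unproved.

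The same defect propagates to (iii): ``by minimality of the complexification'' only gives that the complexification $\sS$ of $(S,0)$ is contained in $\sF(\sX)$, not that $\sF(\sX)$ is smallest. One must rule out $\sS\subsetneq\sF(\sX)$, e.g.\ by noting that $\sF^{-1}(\sS)$ is a complex germ containing $\iota(X)$, hence containing $\sX$, so $\sF(\sX)\subset\sS$; the paper instead reduces to $(X,0)$ irreducible, so that $\sX$ and $\sF(\sX)$ are irreducible, and deduces (iii) from (ii). Two smaller points: the statement also asserts that $(F(X),0)$ is a semianalytic germ of dimension $\ell$, which you never address (the paper obtains it by writing $F(X)$ as a projection of $\Gamma_F\cap(X\times\C^n)$ and invoking the {\L}ojasiewicz--Tarski--Seidenberg theorem), and your dimension count for (i) is fine once the inclusion $\iota(S)\subset\sF(\sX)$ and the finite-mapping dimension facts are in place.
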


\begin{proof}
Denote by $(\Gamma_F,0)$ the germ of the graph of $F$ at $0\in \C^n_z \times \C^n_w$. Then
$F((X,0)) = \pi_w \left((\Gamma_F, 0)\cap \left(X \times \C^n,0\right) \right) $ is semianalytic
by the {\L}ojasiewicz-Tarski-Seidenberg theorem (see \cite[Lemma 5.3]{MR4407043}) of the same
dimension as $X$, therefore $S$ is of the same dimension too. 

For the second assertion, let $\rho$ be any germ of a real-analytic function near $0$.
We need to show that $\rho(F(z),\overline{F(z)})=0$ for $z\in X$ happens if and
only if $\rho(\sF (z,\zeta)) =0$ for $(z,\zeta)\in \sX$, which is the definition of the 
complexification. 

In order to prove the third assertion, assume without loss of generality that $(X,0)$ is 
irreducible. Hence both $(\sX,0)$ and $(\sF (\sX),0)$ are irreducible, and the assertion follows from (ii). 

The fourth assertion follows since $(F(Y),0)$ is a germ of a complex-analytic subvariety containing
$(S,0)$, and also coincides with the projection of the complexification of $(S,0)$.
\end{proof}

\begin{example}
    Coherence is not preserved under a finite map.  Let $X = \R^2 \subset \C_{(z,w)}^2$
    and let $F(z,w) = (z+izw,w^2)$.  Let $Y \subset \C_{(\zeta,\omega)}^2$ be given by
    $y^2-x^2s=0, t=0$.  The subvariety $Y$ is the standard Whitney umbrella (crosscap), which is a 
    noncoherent subvariety.
    The map $F$ is finite, and the image $F(X)$ consists of the points of $Y$ of dimension $2$.
\end{example}


\def\MR#1{\relax\ifhmode\unskip\spacefactor3000 \space\fi%
  \href{http://www.ams.org/mathscinet-getitem?mr=#1}{MR#1}}

\begin{bibdiv}
\begin{biblist}

\bib{BER:book}{book}{
  author={Baouendi, M. Salah},
  author={Ebenfelt, Peter},
  author={Rothschild, Linda Preiss},
  title={Real submanifolds in complex space and their mappings},
  series={Princeton Mathematical Series},
  volume={47},
  publisher={Princeton University Press, Princeton, NJ},
  date={1999},
  pages={xii+404},
  isbn={0-691-00498-6},
  review={\MR{1668103}},
}

\bib{MR1704996}{article}{
   author={Burns, Daniel},
   author={Gong, Xianghong},
   title={Singular Levi-flat real analytic hypersurfaces},
   journal={Amer. J. Math.},
   volume={121},
   date={1999},
   number={1},
   pages={23--53},
   issn={0002-9327},
   review={\MR{1704996}},
}

\bib{MR1224231}{book}{
   author={D'Angelo, John P.},
   title={Several complex variables and the geometry of real hypersurfaces},
   series={Studies in Advanced Mathematics},
   publisher={CRC Press, Boca Raton, FL},
   date={1993},
   pages={xiv+272},
   isbn={0-8493-8272-6},
   review={\MR{1224231}},
}

\bib{MR0477153}{article}{
   author={Diederich, Klas},
   author={Fornaess, John E.},
   title={Pseudoconvex domains with real-analytic boundary},
   journal={Ann. of Math. (2)},
   volume={107},
   date={1978},
   number={2},
   pages={371--384},
   issn={0003-486X},
   review={\MR{0477153}},
   doi={10.2307/1971120},
}

\bib{MR4311600}{article}{
   author={Lamel, Bernhard},
   author={Lebl, Ji\v r\'i},
   title={Segre nondegenerate totally real subvarieties},
   journal={Math. Z.},
   volume={299},
   date={2021},
   number={1-2},
   pages={163--181},
   issn={0025-5874},
   review={\MR{4311600}},
   doi={10.1007/s00209-020-02659-6},
}

\bib{MR1013362}{book}{
   author={Guaraldo, Francesco},
   author={Macr\`i, Patrizia},
   author={Tancredi, Alessandro},
   title={Topics on real analytic spaces},
   series={Advanced Lectures in Mathematics},
   publisher={Friedr. Vieweg \& Sohn, Braunschweig},
   date={1986},
   pages={x+163},
   isbn={3-528-08963-6},
   review={\MR{1013362}},
   doi={10.1007/978-3-322-84243-5},
}

\bib{MR4407043}{article}{
   author={Lebl, Ji\v r\'i},
   title={Segre-degenerate points form a semianalytic set},
   journal={Proc. Amer. Math. Soc. Ser. B},
   volume={9},
   date={2022},
   pages={159--173},
   review={\MR{4407043}},
   doi={10.1090/bproc/99},
}

\bib{MR1131081}{book}{
   author={\L ojasiewicz, Stanis\l aw},
   title={Introduction to complex analytic geometry},
   note={Translated from the Polish by Maciej Klimek},
   publisher={Birkh\"auser Verlag, Basel},
   date={1991},
   pages={xiv+523},
   isbn={3-7643-1935-6},
   review={\MR{1131081}},
   doi={10.1007/978-3-0348-7617-9},
}

\bib{Whitney:book}{book}{
   author={Whitney, Hassler},
   title={Complex analytic varieties},
   publisher={Addison-Wesley Publishing Co., Reading, Mass.-London-Don
   Mills, Ont.},
   date={1972},
   pages={xii+399},
   review={\MR{0387634}},
}

\end{biblist}
\end{bibdiv}


\end{document}